\newcolumntype{Y}{>{\centering\arraybackslash}X}
\newcolumntype{?}{!{\vrule width 1.2pt}}
\theoremstyle{plain}
\newtheorem{theorem}{Theorem}[section]
\newtheorem{lemma}[theorem]{Lemma}
\newtheorem{proposition}[theorem]{Proposition}
\theoremstyle{remark}
\newtheorem{remark}[theorem]{Remark}
\theoremstyle{definition}
\numberwithin{equation}{section} 
\numberwithin{figure}{section}
\numberwithin{table}{section}
\DeclarePairedDelimiter{\norm}{\lVert}{\rVert} 
\DeclarePairedDelimiter{\abs}{\lvert}{\rvert} 
\DeclarePairedDelimiter{\inner}{\langle}{\rangle}
\newcommand{\sref}[1]{section~\ref{#1}}
\newcommand{\fref}[1]{figure~\ref{#1}}
\newcommand{\Fref}[1]{Figure~\ref{#1}}
\newcommand{\absm}[1]{\abs{#1}}
\newcommand{\innerm}[1]{\inner{#1}}
\newcommand{\spara}[1]{\left[#1\right]}
\newcommand{\rum}[1]{\mathbb{#1}}
\newcommand{\nd}{\mathcal{R}}
\newcommand{\mta}{\mathcal{M}_a}
\newcommand{\jhalf}{J_a^{1/2}}
\newcommand{\doroverline}[2]{\overline{#1#2}}
\newcommand{\roverline}[1]{\mathpalette\doroverline{#1}}
\newcommand{\essinf}{\mathop{\textup{ess}\,\textup{inf}}}
\newcommand{\ident}{\mathop{\textup{Id}}}
\newcommand{\sang}{\zeta}
\DeclareMathOperator{\mspan}{span}
\begin{document}



\title{Comparison of linear and non-linear monotonicity-based shape reconstruction using exact matrix characterizations}

\author{Henrik Garde$^\ast$\thanks{$^\ast$Corresponding author. Email: hgar@dtu.dk
\vspace{6pt}} \\\vspace{6pt}  {\em{Department of Applied
  Mathematics and Computer Science, Technical University of Denmark,
 2800 Kgs. Lyngby, Denmark}}
\\\received{March 2016} }

\maketitle

\begin{abstract}
	Detecting inhomogeneities in the electrical conductivity is a special case of the inverse problem in electrical impedance tomography, that leads to fast direct reconstruction methods. One such method can, under reasonable assumptions, exactly characterize the inhomogeneities based on monotonicity properties of either the Neumann-to-Dirichlet map (non-linear) or its Fr\'echet derivative (linear). We give a comparison of the non-linear and linear approach in the presence of measurement noise, and show numerically that the two methods give essentially the same reconstruction in the unit disk domain. For a fair comparison, exact matrix characterizations are used when probing the monotonicity relations to avoid errors from numerical solution to PDEs and numerical integration. Using a special factorization of the Neumann-to-Dirichlet map also makes the non-linear method as fast as the linear method in the unit disk geometry. 
\end{abstract}

\begin{keywords}
Electrical impedance tomography; monotonicity method; inverse boundary value problem; ill-posed problem; direct reconstruction
\end{keywords}

\begin{classcode}65N21; 35R30; 35Q60; 35R05\end{classcode}

\section{Introduction} \label{sec:intro}

In electrical impedance tomography (EIT) the internal electrical conductivity $\gamma$ inside a bounded Lipschitz domain, which in this paper is the unit disk domain $\rum{D}$ in $\rum{R}^2$, is determined from boundary current-voltage measurements recorded through surfaces electrodes. The underlying mathematical problem, also known as the Calder\'on problem \cite{Calder'on1980}, is an inverse problem. A common mathematical formulation of EIT is the \emph{continuum model}
\begin{equation}
	\nabla \cdot(\gamma\nabla u) = 0 \text{ in } \rum{D}, \quad \nu\cdot \gamma\nabla u = g \text{ on } \partial\rum{D}, \quad \int_{\partial\rum{D}} u\, ds = 0, \label{eq:pde}
\end{equation}
where $u$ is the internal electrical potential, $\nu$ is an outwards pointing unit normal, and $g$ is the applied current density. The latter condition in \eqref{eq:pde} is a grounding of the total electrical potential at the boundary. If $\gamma\in L_+^\infty(\rum{D})$ and $g\in H^{-1/2}_\diamond(\partial\rum{D})$ with
\begin{align*}
	L^\infty_+(\rum{D}) &\equiv \left\{w\in L^\infty(\rum{D}) : \essinf w > 0 \right\}, \\
	H_\diamond^{-1/2}(\partial\rum{D}) &\equiv \left\{w\in H^{-1/2}(\partial\rum{D}) : \innerm{w,1}_{H^{-1/2},H^{1/2}} = 0 \right\},
\end{align*}
then standard elliptic theory gives rise to a unique solution $u\in H^1_\diamond(\rum{D})$ to \eqref{eq:pde}, where the $\diamond$-symbol implies functions with zero mean on the boundary
\begin{align*}
	H_\diamond^{1/2}(\partial\rum{D}) &\equiv \left\{ w\in H^{1/2}(\partial\rum{D}) : \int_{\partial\rum{D}} w \, ds = 0 \right\}, \\
	H_\diamond^1(\rum{D}) &\equiv \left\{ w\in H^1(\partial\rum{D}) : w|_{\partial\rum{D}}\in H_\diamond^{1/2}(\partial\rum{D}) \right\}.
\end{align*}
The forward problem of EIT is the map $\nd$ from the conductivity $\gamma$ to the Neumann-to-Dirichlet (ND) map $\nd(\gamma) : \nu\cdot \gamma\nabla u \mapsto u|_{\partial\rum{D}}$. The ND map relates any applied current to the corresponding boundary potential. In general $\nd(\gamma)$ is a map from $H_\diamond^{-1/2}(\partial\rum{D})$ to $H_\diamond^{1/2}(\partial\rum{D})$, however in this paper it suffices to restrict it to a map in $\mathcal{L}(L^2_\diamond(\partial\rum{D}))$, the space of linear and bounded operators from $L^2_\diamond(\partial\rum{D})$ to itself, where
\begin{equation*}
	L^2_\diamond(\partial\rum{D}) \equiv \left\{ w\in L^2(\partial\rum{D}) : \int_{\partial\rum{D}} w\,ds = 0  \right\}.
\end{equation*}
In this sense $\nd(\gamma)$ is both compact and self-adjoint in the usual $L^2(\partial\rum{D})$-inner product, which will be denoted by $\innerm{\cdot,\cdot}$. The inverse problem of EIT is from knowledge of $\nd(\gamma)$ to reconstruct $\gamma$. Uniqueness has been shown with various regularity assumptions depending on the dimension $d$ \cite{Sylvester1987,Nachman1988a,Novikov1988,Nachman1996,Haberman_2013} and for $d=2$ there is uniqueness for general $L^\infty_+$-conductivities when the domain is simply connected \cite{Astala2006a}.

The inverse problem of EIT is severely ill-posed and with reasonable assumptions it is only possible to get conditional logarithmic stability  \cite{Alessandrini1988,Mandache2001}. It is therefore not always of interest to perform a full reconstruction of $\gamma$, but rather reconstruct inclusions/inhomogeneities from a known or uninteresting background, which is an easier problem. Here it is assumed that 
\begin{equation}
	\gamma \equiv 1 + \kappa \chi_\mathcal{D}, \label{gammaD}
\end{equation}
where $\chi_\mathcal{D}$ is a characteristic function over the sought inclusion $\mathcal{D}$ with $\overline{\mathcal{D}}\subset \rum{D}$, on which the background conductivity $1$ is perturbed by $\kappa\in L^\infty_+(\rum{D})$. Direct reconstruction methods for such inclusion detection are prominently the factorization method \cite{Bruhl2001,Bruhl2000,Lechleiter2008a} and the enclosure method \cite{Ikehata1999a,Ikehata2000c}. In this paper we investigate the more recent monotonicity method \cite{Harrach13,GardeStaboulis_2016,Harrach15,Tamburrino2006,Tamburrino2002} that makes use of a monotonicity property of $\gamma\mapsto\nd(\gamma)$. The basic idea of the method is to determine whether or not a chosen ball $B$ is inside the inclusion $\mathcal{D}$. For instance in the simple case $\gamma \equiv 1 + \chi_\mathcal{D}$, then
\begin{equation}
	B\subseteq \mathcal{D} \quad\text{implies}\quad \nd(1+\chi_B)-\nd(1+\chi_\mathcal{D}) \geq 0, \label{nonlinapp}
\end{equation}
where the inequality is in terms of positive semi-definiteness. Checking the positive semi-definiteness for all balls in $\rum{D}$ gives an upper bound on $\mathcal{D}$, and in \cite{Harrach13} it was shown that it completely characterizes $\overline{\mathcal{D}}$ if $\mathcal{D}$ has connected complement. 

The map $\gamma\mapsto\nd(\gamma)$ is non-linear and thus the evaluation of $\nd(1+\chi_B)$ is costly as each evaluation requires solving \eqref{eq:pde} for several Neumann conditions. In \cite{Harrach13} it was shown that, without loss of shape information, the non-linear part could be replaced by a linearization
\begin{equation}
	B\subseteq \mathcal{D} \quad\text{implies}\quad \nd(1) + \tfrac{1}{2}\nd'(1)\chi_B- \nd(1+\chi_\mathcal{D})\geq 0. \label{linapp}
\end{equation}
Using the Fr\'echet derivative is attractive as it only requires one evaluation of the derivative which can often be determined beforehand. 

In this paper we compare reconstructions based on the non-linear approach \eqref{nonlinapp} and the linear approach \eqref{linapp}, which in the noiseless case solve the exact inverse problem of reconstructing $\overline{\mathcal{D}}$. However, it is still unknown if the two methods are equally noise robust. In \cite{Harrach15} resolution bounds for stable reconstruction are determined, which for the linear method are more pessimistic, though the bounds are not shown to be optimal. With various levels of noise added to the measurements, the numerical examples in \sref{sec:numerical} surprisingly show that there is essentially no difference in the reconstructions based on the non-linear and the linear approach for the chosen examples.

For a fair comparison of the non-linear and linear method, exact matrix representations are determined for $\nd(1+\beta\chi_B)$ and $\nd'(1)\chi_B$ for any ball $B$ in $\rum{D}$, in order to avoid errors from numerical solution to PDEs and numerical integration. In this specific geometry the non-linear method furthermore becomes as fast as the linear method, by use of an explicit factorization derived from M\"obius transformations.

More precise forward models for EIT exist for practical measurements, such as the \emph{complete electrode model} (CEM). It was recently proved in \cite{GardeStaboulis_2016} that the monotonicity method can be regularized against noise and generalizes to various approximations of the continuum model, including the CEM. By simply replacing the ND map with the CEM counterpart gives a reconstruction that is interlaced between two reconstructions from the continuum model; one without regularization and one with regularization. Thus, in this sense, the comparison made here also directly applies to the CEM variant of the monotonicity method.

The contents of this paper are organized as follows: in \sref{sec:monorecon} the monotonicity method is outlined and M\"obius transformations are introduced to relate non-concentric ball inclusions to concentric ones. In \sref{sec:matstruct} the exact matrix representations of the ND map and its Fr\'echet derivatives are derived, and their matrix structures are elaborated on. Implementation details and numerical examples are given in \sref{sec:numerical}, and finally we conclude in \sref{sec:conclusions}.

\section{Monotonicity-based shape reconstruction} \label{sec:monorecon}

In this section we shortly outline the regularized monotonicity method in the linear and non-linear case. For $\kappa\in L^\infty_+(\rum{D})$ and open set $\mathcal{D}\subset \rum{D}$, define the conductivity as in \eqref{gammaD} where $\chi_\mathcal{D}$ is a characteristic function on $\mathcal{D}$. We denote for $\beta>0$ the monotonicity-based reconstructions by the following sets of open balls:
\begin{align*}
	\mathcal{T} &\equiv \left\{ B\subseteq \rum{D} \text{ open ball} : \nd(1+\beta\chi_B) -\nd(\gamma)\geq 0  \right\}, \\
	\mathcal{T}' &\equiv \left\{ B\subseteq \rum{D} \text{ open ball} : \nd(1) + \beta\nd'(1)\chi_B - \nd(\gamma)\geq 0 \right\}, 
\end{align*}
where $\cup \mathcal{T}$ and $\cup \mathcal{T}'$ are the unions of the above ball-collections, which will be directly compared to the inclusion $\mathcal{D}$. Here the Fr\'echet derivative $\nd'(1)$ of $\gamma\mapsto\nd(\gamma)$ evaluated at $1$ and in direction $\eta\in L^\infty(\rum{D})$ is given by
\begin{equation}
	\innerm{\nd'(1)[\eta]f,g} = -\int_{\rum{D}} \eta \nabla w_f\cdot \overline{\nabla w_g}\,dx, \label{eq:frechet}
\end{equation}
where $w_f$ and $w_g$ are solutions to \eqref{eq:pde} with conductivity $1$ and Neumann condition $f$ and $g$, respectively.

Using the formulation in \cite[Section 2.2]{GardeStaboulis_2016}, if we assume that $\overline{\mathcal{D}}\subseteq \rum{D}$ and $\mathcal{D}$ has connected complement (no holes in the inclusions), then
\begin{align}
	\mathcal{D}\subseteq \cup \mathcal{T} \subseteq \overline{\mathcal{D}} &\quad \text{if} \quad 0<\beta\leq \essinf\kappa, \label{eq:cupT}\\
	\mathcal{D}\subseteq \cup \mathcal{T}' \subseteq \overline{\mathcal{D}} &\quad \text{if} \quad 0<\beta\leq \essinf\left(\frac{\kappa}{1+\kappa}\right). \label{eq:cupT'}
\end{align} 

\begin{remark}
	Since $\kappa\in L^\infty_+(\rum{D})$ then $\essinf \kappa > 0$, which ensures that the bounds presented in \cite{Harrach13} known as the inner and outer support of $\kappa$ are given by $\mathcal{D}$ and $\overline{\mathcal{D}}$, respectively. Furthermore, it allows the use of a single $\beta$-value for the test inclusions (rather than a union for all $\beta$-values) and avoids any further regularity assumptions on $\kappa$. See \cite{GardeStaboulis_2016} for details on the derivation in this setting, and also in particular Examples 4.2 and 4.4 in \cite{Harrach13}.
\end{remark}

Assuming prior knowledge of a lower bound on the perturbation $0<\beta^\textup{L} \leq \kappa$, and writing
\begin{equation*}
	\frac{\kappa}{1+\kappa} = 1-\frac{1}{1+\kappa},
\end{equation*}
then an admissible choice of the $\beta$-value in \eqref{eq:cupT} and \eqref{eq:cupT'} can be guaranteed by
\begin{equation}
	\beta^\textup{nonlin} \equiv \beta^\textup{L}, \qquad \beta^\textup{lin} \equiv 1-\frac{1}{1+\beta^{\textup{L}}} =  \frac{\beta^\textup{L}}{1+\beta^\textup{L}}. \label{eq:betaval}
\end{equation} 
The main advantage of the linear method is that $\nd'(1)$ can be evaluated cheaply and prior to reconstruction. While the $\nd(1+\beta\chi_B)$-maps can also be evaluated prior to reconstruction it requires knowledge of the $\beta$-value beforehand, and different $\beta$-values require new evaluations. 

Given a compact noise perturbation $E^\delta\in \mathcal{L}(L^2_\diamond(\partial\rum{D}))$ with $\norm{E^\delta}_{\mathcal{L}(L^2_\diamond(\partial\rum{D}))} \leq \delta$, then the noisy datum $\nd^\delta(\gamma)$ is modelled with additive noise
\begin{equation}
	\nd^\delta(\gamma) \equiv \nd(\gamma) + E^\delta. \label{eq:ndnoisy}
\end{equation}
For regularization parameter choice $\alpha(\delta) \geq \delta$ with $\lim_{\delta\to 0}\alpha(\delta) = 0$ it was proved in \cite[Theorem 1]{GardeStaboulis_2016} that the following regularized reconstructions give upper bounds $\cup \mathcal{T}_\alpha$ and $\cup\mathcal{T}_\alpha'$ of $\mathcal{D}$, and that they converge as the noise level tends to zero, $\delta\to 0$:
\begin{align}
	\mathcal{T}_\alpha &\equiv \left\{ B\subseteq \rum{D} \text{ open ball} : \nd(1+\beta\chi_B) + \alpha\ident - \nd^\delta(\gamma)\geq 0  \right\}, \label{eq:regnonlinrecon}\\
	\mathcal{T}'_\alpha &\equiv \left\{ B\subseteq \rum{D} \text{ open ball} : \nd(1) + \beta\nd'(1)\chi_B + \alpha\ident - \nd^\delta(\gamma) \geq 0 \right\}. \label{eq:reglinrecon}
\end{align}

The background conductivity of $1$ is merely for ease of presentation, and other (constant) background conductivities can be used with the identity
\begin{equation*}
	\nd(c\gamma) = \frac{1}{c}\nd(\gamma),\enskip c>0.
\end{equation*}

\subsection{M\"obius transformations and additional notation}

To get a precise and fast evaluation of $\nd(1+\beta\chi_B)$, we use M\"obius transformations to move non-concentric balls $B_{C,R}$ to concentric balls $B_{0,r}$, and abuse that the spectrum of $\nd(1+\beta\chi_{B_{0,r}})$ is known. Note that this section is largely based on \cite{Garde2016}. To shorten notation we will use the abbreviation 
\begin{equation*}
	\gamma_{C,R} \equiv 1 + \beta\chi_{B_{C,R}}, 
\end{equation*}
where $B_{C,R}$ is an open ball with centre $C$ and radius $R$, and we will throughout the paper identify $(x_1,x_2)\in\rum{R}^2$ with $x_1+ix_2\in\rum{C}$. Denote by $M_a$, for any $a\in\rum{D}$, the M\"obius transformation
\begin{equation*}
	M_a(x) \equiv \frac{x-a}{\overline{a}x-1},\enskip x\in\rum{D}.
\end{equation*}
Here $M_a : \rum{D}\to\rum{D}$ and $\partial\rum{D}\to\partial\rum{D}$, and is furthermore an involution i.e.\ its own inverse $M_a^{-1} = M_a$. Let $a \equiv \rho e^{i\sang}$ for $0\leq \rho < 1$ and $\sang\in\rum{R}$. For $0<r<1$ then $M_a(B_{0,r}) = B_{C,R}$ with
\begin{equation*}
	C = \frac{\rho(r^2-1)}{\rho^2r^2-1}e^{i\sang},\quad R = \frac{r(\rho^2-1)}{\rho^2r^2-1}.
\end{equation*}
Furthermore, for $C = ce^{i\sang}$ with $0\leq c<1$ and $0<R<1-c$, then there is a unique $a\in\rum{D}$ such that $M_a(B_{C,R}) = B_{0,r}$ where $r$ and $a$ satisfies
\begin{equation*}
	r = \frac{1+R^2-c^2-\sqrt{((1-R)^2-c^2)((1+R)^2-c^2)}}{2R}, \quad a = \frac{C}{1-Rr}.
\end{equation*}
The above notation for the relation between a non-concentric ball $B_{C,R}$ and a concentric ball $B_{0,r}$ will be used in the remainder of the paper, and it is also illustrated in \fref{fig:fig0}.
\begin{figure}[htb]
\centering
\includegraphics[width = .8\textwidth,natwidth=8.5in,natheight=3.33in]{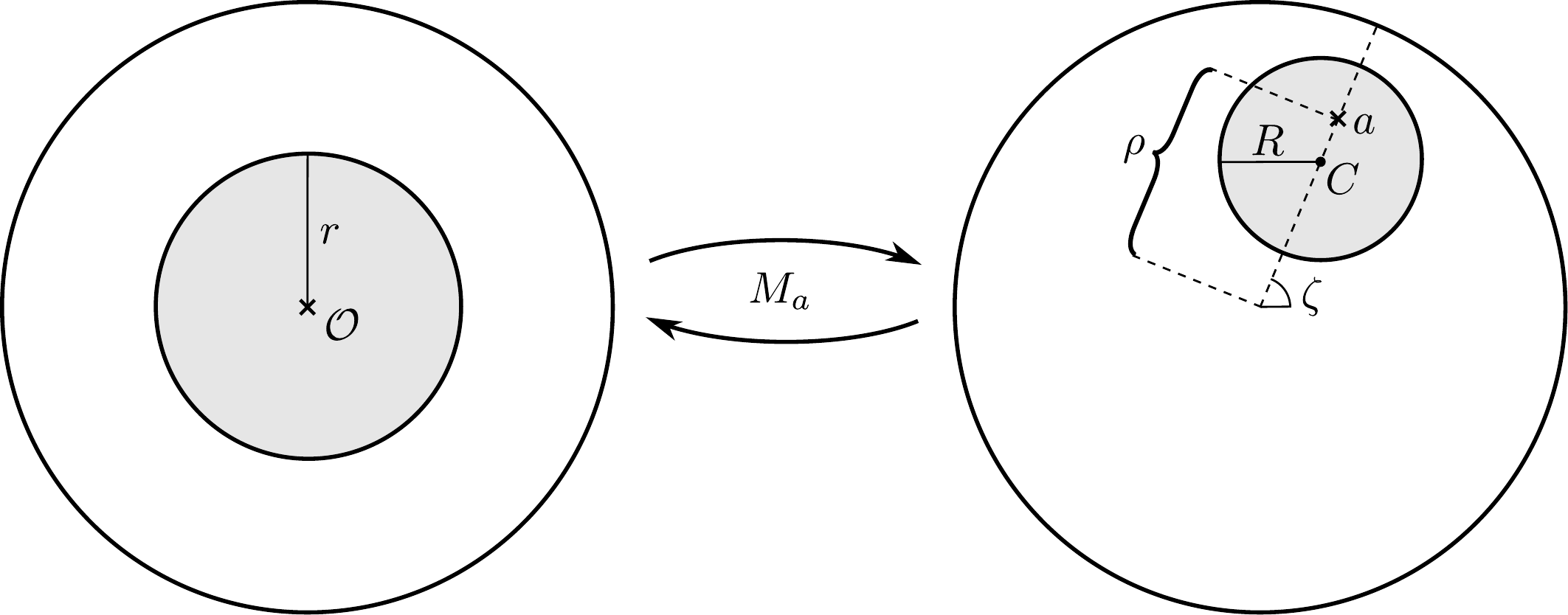}
\caption{Illustration of $M_a$ acting on balls $B_{0,r}$ and $B_{C,R}$.} \label{fig:fig0}
\end{figure}

For a function $f:\rum{D}\to \rum{C}$ (or $\partial\rum{D}\to\rum{C}$) we define $\mta f \equiv f\circ M_a$, i.e.\ the transformation $\mta$ applying the change of variables $M_a$. The Jacobian determinant for the change of variables on $\rum{D}$ is
\begin{equation*}
	J_a(x) \equiv \left(\frac{1-\rho^2}{\absm{\overline{a}x -1}^2} \right)^2, \enskip x\in\rum{D}.
\end{equation*}
The Jacobian determinant for the integral on $\partial\rum{D}$ is $J_a^{1/2}|_{\partial\rum{D}}$, and there is the following factorization of $\nd(\gamma_{C,R})$ (cf.\ \cite[Appendix B]{Garde2016})
\begin{equation}
	\nd(\gamma_{C,R}) = P\mta \nd(\gamma_{0,r})\jhalf\mta. \label{ndmob}
\end{equation}
Here $P : L^2(\partial\rum{D})\to L_\diamond^2(\partial\rum{D})$ is the orthogonal projection given by $Pf \equiv f-\frac{1}{2\pi}\int_{\partial\rum{D}} f\,ds$, and $J_a^{1/2}:L^2(\partial\rum{D})\to L^2(\partial\rum{D})$ is the multiplication operator $f\mapsto J_a^{1/2}|_{\partial\rum{D}}f$. 

The operator $\nd(\gamma_{0,r})$ for a concentric ball has the Fourier basis 
\begin{equation}
	f_n(\theta) \equiv \frac{1}{\sqrt{2\pi}}e^{in\theta},\enskip n\in\rum{Z}\setminus\{0\}, \label{eq:fbasis}
\end{equation}
as eigenfunctions with eigenvalues
\begin{equation}
	\lambda_n \equiv \frac{2+\beta(1-r^{2\absm{n}})}{2+\beta(1+r^{2\absm{n}})}\frac{1}{\absm{n}},\enskip n\in\rum{Z}\setminus\{0\}. \label{eq:lam}
\end{equation}
So the factorization in \eqref{ndmob} implies that the $\beta$-dependence of $\nd(\gamma_{C,R})$ is given explicitly through a diagonalization of $\nd(\gamma_{0,r})$ with the eigenvalues \eqref{eq:lam}. Here $P\mta$ and $\jhalf\mta$ only depend on the transformation parameter $a$, and can be determined prior to reconstruction, thus making the non-linear and linear methods have identical computational complexity.

In order to determine matrix representations of $P\mta$ and $\jhalf\mta$ in the Fourier basis it is relevant to investigate the action of $M_a$ on a trigonometric function. Note in particular that for $f(x) = x^n$ with $x=x_1+ix_2$, then $\mta f = M_a(x)^n = M_a(e^{i\theta})^n$ on $\partial\rum{D}$. Utilizing that $M_a$ maps $\partial\rum{D}$ to itself:
\begin{equation}
	M_a(e^{in\theta}) = e^{in\psi_a(\theta)} = M_a(e^{i\theta})^n =  \left(\frac{e^{i\theta}-\rho e^{i\sang}}{\rho e^{i(\theta-\sang)}-1}\right)^n, \label{eq:mobexp}
\end{equation}
where $\psi_a$ is the corresponding transformation in the angular variable on $\partial\rum{D}$.

\begin{lemma} \label{lemma:psia}
	The map $\psi_a$ in \eqref{eq:mobexp} is given by
	\begin{equation*}
		\psi_a(\theta) \equiv \pi + \sang + 2\arctan \left( \frac{1+\rho}{1-\rho}\tan \left( \frac{\theta-\sang}{2} \right) \right).
	\end{equation*}
\end{lemma}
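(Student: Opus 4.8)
The plan is to compute $e^{i\psi_a(\theta)} = M_a(e^{i\theta})$ directly and extract the argument. First I would write $e^{i\psi_a(\theta)} = \dfrac{e^{i\theta}-\rho e^{i\sang}}{\rho e^{i(\theta-\sang)}-1}$ and multiply numerator and denominator by $e^{-i\sang/2}e^{-i\theta/2}$ (or an equivalent symmetrizing factor) so that both become expressions in the half-angle $(\theta-\sang)/2$. Explicitly, factoring out $e^{i(\theta+\sang)/2}$ from the numerator gives $e^{i(\theta+\sang)/2}\bigl(e^{i(\theta-\sang)/2}-\rho e^{-i(\theta-\sang)/2}\bigr)$, and factoring out $e^{i(\theta-\sang)/2}$ from the denominator gives $e^{i(\theta-\sang)/2}\bigl(\rho e^{i(\theta-\sang)/2}-e^{-i(\theta-\sang)/2}\bigr)$. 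Writing $\varphi \equiv (\theta-\sang)/2$, the ratio becomes
\begin{equation*}
	e^{i\psi_a(\theta)} = e^{i\sang}\,\frac{e^{i\varphi}-\rho e^{-i\varphi}}{\rho e^{i\varphi}-e^{-i\varphi}} = e^{i\sang}\,\frac{(1-\rho)\cos\varphi + i(1+\rho)\sin\varphi}{-(1-\rho)\cos\varphi + i(1+\rho)\sin\varphi}.
\end{equation*}

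Next I would observe that the denominator is $-\overline{(\text{numerator})}$ when $\varphi$ is real, so the fraction has modulus $1$ (confirming $M_a$ maps $\partial\rum{D}$ to itself) and its argument is $\pi + 2\Arg\bigl((1-\rho)\cos\varphi + i(1+\rho)\sin\varphi\bigr)$. Hence $\psi_a(\theta) = \sang + \pi + 2\arctan\!\left(\dfrac{(1+\rho)\sin\varphi}{(1-\rho)\cos\varphi}\right) = \sang + \pi + 2\arctan\!\left(\dfrac{1+\rho}{1-\rho}\tan\varphi\right)$, which is the claimed formula after substituting back $\varphi = (\theta-\sang)/2$.

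The main point requiring care is the branch of $\arctan$ and the additive constant: the identity $\Arg(u+iv) = \arctan(v/u)$ only holds up to multiples of $\pi$, and the ``$+\pi$'' shift together with the factor $2$ in front of $\arctan$ is exactly what makes $\psi_a$ a continuous increasing bijection of $\partial\rum{D}$ onto itself with total increment $2\pi$ as $\theta$ runs over a period. I would therefore verify continuity by noting that $\psi_a$, as defined by the stated formula, jumps by $2\pi$ precisely when $(\theta-\sang)/2$ crosses $\pi/2 \pmod \pi$, i.e. at the same angles where $M_a(e^{i\theta})$ passes through $-e^{i\sang}$, so the formula indeed parametrizes the correct continuous branch; one also checks the normalization at a convenient value such as $\theta = \sang$, where $\varphi = 0$ gives $\psi_a(\sang) = \sang + \pi$, consistent with $M_a(e^{i\sang}) = \frac{e^{i\sang}-\rho e^{i\sang}}{\rho - 1} = -e^{i\sang}$. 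This pins down the constant and completes the proof.
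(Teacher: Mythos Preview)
Your argument is correct. The factorization of numerator and denominator in half-angle form, followed by the observation that the denominator equals $-\overline{\text{numerator}}$, is a clean way to extract the argument, and your verification at $\theta=\sang$ together with the $2\pi$-jump discussion handles the branch issue properly.

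The paper's proof is essentially the same computation packaged differently: it first records the identity $e^{i(\pi+2\arctan z)} = \frac{-1-iz}{1-iz}$ (the tangent half-angle substitution in reverse), reduces to the case $\sang=0$ via $M_a(e^{i\theta}) = e^{i\sang}M_\rho(e^{i(\theta-\sang)})$, and then \emph{assumes} $\psi_\rho(\theta)$ has the form $\pi+2\arctan z$ and solves $\frac{e^{i\theta}-\rho}{\rho e^{i\theta}-1}=\frac{-1-iz}{1-iz}$ for $z$. Your approach is a direct forward computation and arguably more transparent about why the $\pi$ and the factor $2$ appear (they come from the $-\overline{\text{numerator}}$ structure), while the paper's ``guess and verify'' form makes the extension to $z=\pm\infty$ slightly more explicit. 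Both routes are elementary and of comparable length.
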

\begin{proof}
	By standard trigonometric identities it follows that
	\begin{equation*}
		e^{iz} = \frac{1+i\tan(z/2)}{1-i\tan(z/2)}, \enskip z\in(-\pi,\pi),
	\end{equation*}
	thus
	\begin{equation}
		e^{i(\pi + 2\arctan(z))} = \frac{-1-iz}{1-iz},\enskip z\in [-\infty,\infty]. \label{eq:exparctan}
	\end{equation}
	Note that \eqref{eq:exparctan} is defined on the extended real line, since both sides of the equation converge to $1$ in the limit $z\to\pm\infty$.
	
	Since $M_a(e^{i\theta}) = e^{i\sang}M_\rho(e^{i(\theta-\sang)})$ we get 
	\begin{equation}
		\psi_a(\theta) = \sang + \psi_\rho(\theta-\sang), \label{eq:psiarho}
	\end{equation}
	so it is sufficient to consider $\sang = 0$. Now assume that $\psi_\rho(\theta)$ can be written on the form \eqref{eq:exparctan}, then
	\begin{equation}
		e^{i\psi_\rho(\theta)} = \frac{e^{i\theta}-\rho}{\rho e^{i\theta}-1} = \frac{-1-iz}{1-iz} \Rightarrow z = \frac{(1+\rho)(1-e^{i\theta})}{(1-\rho)(1+e^{i\theta})}i = \frac{1+\rho}{1-\rho}\tan(\theta/2). \label{eq:psirho}
	\end{equation}
	Since $z\in [-\infty,\infty]$ in \eqref{eq:psirho} then the assumption that $\psi_\rho(\theta)$ is of the form \eqref{eq:exparctan} is valid. Now combining \eqref{eq:psirho} with \eqref{eq:exparctan} and \eqref{eq:psiarho} yields the desired result.
\end{proof}

\section{Matrix structures and characterizations} \label{sec:matstruct}

For a general real-valued $\gamma\in L^\infty_+(\rum{D})$ it holds that $\nd(\gamma)\overline{f} = \overline{\nd(\gamma)f}$, which is a consequence of $\nd(\gamma)$ being linear and $u$ in \eqref{eq:pde} being real-valued when $\gamma$ and the Neumann condition are real-valued. Defining $\{f_n\}_{n\in\rum{Z}\setminus\{0\}}$ as the usual Fourier basis for $L^2_\diamond(\partial\rum{D})$ as in \eqref{eq:fbasis} gives the identity $\nd(\gamma)f_n = \overline{\nd(\gamma)f_{-n}}$, i.e.
\begin{equation}
	\mathcal{A}_{n,m} \equiv \innerm{\nd(\gamma)f_m,f_n} = \overline{\innerm{\nd(\gamma)f_{-m},f_{-n}}}=\overline{\mathcal{A}_{-n,-m}}, \enskip n,m\in\rum{Z}\setminus\{0\}. \label{centroherm1}
\end{equation}
So by arranging the row and column indices in the matrix representation $\mathcal{A}$ from negative (top left) to positive (bottom right) gives a \emph{centrohermitian} matrix \cite{Pressman1998}, meaning that there is symmetry (similar to a Hermitian matrix) across the centre of the matrix. The centrohermitian property can be written as in \eqref{centroherm1} for that particular choice of indices (which will be used throughout this paper), or in general as
\begin{equation*}
	\mathcal{A} = \mathcal{J}\overline{\mathcal{A}}\mathcal{J}, 
\end{equation*}
where $\mathcal{J}$ is the \emph{exchange matrix} which has zeroes in all entries except on the anti-diagonal (from bottom left to top right) where its entries equal $1$. Since $\mathcal{R}(\gamma)$ is self-adjoint also makes $\mathcal{A}$ Hermitian in addition to being centrohermitian.

In \cite[Appendix B]{Garde2016} an explicit matrix representation of $\nd(\gamma_{C,R})$ was determined by the use of basis functions orthonormal in weighted $L^2$-inner products. This matrix representation cannot be used for the monotonicity method as the basis functions depend on the transformation $M_a$, and here we need a fixed basis, namely the same used for the datum $\nd(\gamma)$. A common choice of orthonormal basis for the unit disk is the Fourier basis \eqref{eq:fbasis} for $L^2_\diamond(\partial\rum{D})$, which will also be used here. Now a matrix representation of $P\mta$ and $\jhalf\mta$ from \eqref{ndmob} can be found in terms of the Fourier basis. 
\begin{theorem} \label{H:thm}
	Recall that $a = \rho e^{i\sang}$ and define the matrix $H_a$ by
	\begin{equation}
		(H_a)_{n,m} \equiv \inner{f_m,\mta f_n}, \enskip n,m\in\rum{Z}\setminus\{0\}, \label{Hamat}
	\end{equation}
	then $H_a$ has the following properties (note in particular that \eqref{H:complex}-\eqref{H:submat} explicitly defines the entire matrix):
	\begin{enumerate}[(i)]
		\item $H_a$ is a matrix representation of $J_a^{1/2}\mta$. \label{H:mat}
		\item $(H_a)^*$ is a matrix representation of $P\mta$. \label{H:adjointmat}
		\item $H_a$ is involutory, i.e.\ $H_a = H_a^{-1}$. \label{H:inv}
		\item $(H_a)_{n,m} = e^{i(m-n)\sang}(H_{\rho})_{n,m},\enskip\forall n,m$. \label{H:complex}
		\item $H_a$ is centrohermitian, i.e.\ $(H_a)_{n,m} = \overline{(H_a)_{-n,-m}},\enskip\forall n,m$. \label{H:flip}
		\item $H_a$ is block diagonal with $(H_a)_{n,m} = 0$ for $n<0,m>0$ and for $n>0,m<0$. \label{H:SW_NE}
		\item There is the following formula for $n>0,m>0$: \label{H:submat}
		\begin{equation}
			(H_\rho)_{n,m} = \sum_{k = \max\{n-m,0\}}^n (-1)^{n-k}\binom{k+m-1}{k+m-n}\binom{n}{k}\rho^{2k+m-n}. \label{Hkj}
		\end{equation}
	\end{enumerate}
\end{theorem}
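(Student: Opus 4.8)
The plan is to establish the seven properties roughly in the order they are stated, deriving the later structural facts from the explicit action of $M_a$ on the Fourier basis already recorded in \eqref{eq:mobexp} and Lemma~\ref{lemma:psia}. For \eqref{H:mat} and \eqref{H:adjointmat}, I would start from the factorization \eqref{ndmob}, $\nd(\gamma_{C,R}) = P\mta\nd(\gamma_{0,r})\jhalf\mta$, and compare with the Fourier-basis matrix product. The key observation is that $\inner{f_m,\mta f_n} = \inner{f_m, f_n\circ M_a}$; by the change of variables $x = M_a(y)$ on $\partial\rum{D}$ (whose Jacobian is $J_a^{1/2}|_{\partial\rum{D}}$, and using $M_a^{-1}=M_a$), this equals $\inner{J_a^{1/2}(\mta f_m), f_n}$, which is exactly the $(n,m)$ entry of the matrix of $J_a^{1/2}\mta$ in the orthonormal basis $\{f_n\}$. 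This gives \eqref{H:mat}. For \eqref{H:adjointmat}, note that $P\mta$ and $J_a^{1/2}\mta$ are (up to the projection $P$, which acts trivially on $L^2_\diamond$) formal adjoints of each other with respect to the $L^2$ pairing — precisely because the change-of-variables weight appears on one side or the other — so the matrix of $P\mta$ restricted to $L^2_\diamond$ is $H_a^*$; one should check that $P$ only kills the constant function, which is outside the span of $\{f_n\}_{n\neq 0}$, so no information is lost.

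For \eqref{H:inv}, since $M_a$ is an involution, $\mta\mta = \ident$ as an operator on functions; combined with the fact that $J_a^{1/2}\mta$ and $P\mta$ are mutually adjoint as just noted, the composition $H_a H_a^*$ (equivalently $H_a^2$ once centrohermiticity/Hermiticity is in place, but more directly the operator identity $(P\mta)(J_a^{1/2}\mta)=\ident$ on $L^2_\diamond$) yields $H_a=H_a^{-1}$; I would phrase this purely at the operator level to avoid circularity. Property \eqref{H:complex} follows directly from \eqref{eq:psiarho}, i.e.\ $\psi_a(\theta)=\sang+\psi_\rho(\theta-\sang)$, so $\mta f_n(\theta) = e^{in\psi_a(\theta)} = e^{in\sang}\,e^{in\psi_\rho(\theta-\sang)}$; substituting into \eqref{Hamat} and shifting the integration variable by $\sang$ produces the stated phase factor $e^{i(m-n)\sang}$ times $(H_\rho)_{n,m}$. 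Property \eqref{H:flip} is then immediate: $\overline{f_{-n}} = f_n$ and $M_a$ has real-analytic real-variable expression, so complex-conjugating \eqref{Hamat} and flipping signs of indices leaves the entry invariant; alternatively it is inherited from the centrohermiticity of $\nd(\gamma_{C,R})$ and $\nd(\gamma_{0,r})$ via \eqref{ndmob}.

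The substantive computation, and what I expect to be the main obstacle, is the block-diagonal structure \eqref{H:SW_NE} together with the closed form \eqref{Hkj}. Both come from expanding $M_a(e^{i\theta})^n$ as a Laurent (in fact, for $n>0$, a genuine power) series in $e^{i\theta}$ and reading off Fourier coefficients. Writing, for $n>0$,
\begin{equation*}
	M_\rho(e^{i\theta})^n = \left(\frac{e^{i\theta}-\rho}{\rho e^{i\theta}-1}\right)^n = (-1)^n (e^{i\theta}-\rho)^n (1-\rho e^{i\theta})^{-n},
\end{equation*}
one expands $(e^{i\theta}-\rho)^n$ by the binomial theorem and $(1-\rho e^{i\theta})^{-n}$ by the negative binomial series; both are power series in $e^{i\theta}$ with nonnegative exponents, so their product has only nonnegative frequencies, which immediately forces $(H_\rho)_{n,m}=0$ for $n>0$, $m<0$ — and by the centrohermiticity \eqref{H:flip} (or the same argument with $\bar\theta$), $(H_\rho)_{n,m}=0$ for $n<0$, $m>0$ as well. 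Then $(H_\rho)_{n,m}$ for $n,m>0$ is the coefficient of $e^{im\theta}$ in that product (using $\inner{f_m,\mta f_n} = \frac{1}{2\pi}\int_0^{2\pi} e^{-im\theta} M_\rho(e^{i\theta})^n\,d\theta$, the Fourier coefficient at frequency $m$); carrying out the Cauchy product of the two binomial expansions, matching the total exponent to $m$, and reindexing, should yield exactly \eqref{Hkj}, with the summation limits $\max\{n-m,0\}\le k\le n$ coming from the ranges of the two binomial indices. The delicate points here are bookkeeping: getting the sign $(-1)^{n-k}$ right (it combines the overall $(-1)^n$, the $(-\rho)^{n-k}$ from $(e^{i\theta}-\rho)^n$, and the sign in $(1-\rho e^{i\theta})^{-n}$), and verifying that the two binomial coefficients combine into $\binom{k+m-1}{k+m-n}\binom{n}{k}$ after the substitution linking the two summation indices. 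I would double-check \eqref{Hkj} against the involution property \eqref{H:inv} and against small cases ($n=1$, or $\rho\to 0$ giving $H_0 = \ident$) as a consistency test.
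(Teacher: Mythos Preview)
Your proposal follows the paper's route closely: \eqref{H:mat}--\eqref{H:adjointmat} via the change-of-variables identity $\innerm{f_m,\mta f_n}=\innerm{\jhalf\mta f_m,f_n}$ and adjointness, \eqref{H:complex}--\eqref{H:flip} via the angular shift \eqref{eq:psiarho}, and \eqref{H:SW_NE}--\eqref{H:submat} by expanding $M_\rho(e^{i\theta})^n$ as a (negative-)binomial power series in $e^{i\theta}$ and reading off Fourier coefficients. The paper does this last expansion on the conjugate $\overline{\mathcal{M}_\rho f_n} = (e^{i\theta}-\rho)^{-n}(\rho e^{i\theta}-1)^n$ rather than on $\mathcal{M}_\rho f_n$ directly, but that is only a bookkeeping difference; your version is arguably cleaner since both factors are visibly power series in $e^{i\theta}$.

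There is, however, one genuine gap: your argument for \eqref{H:inv}. The operator identity $(P\mta)(\jhalf\mta)=\ident$ that you invoke is \emph{false} --- computing gives $(P\mta)(\jhalf\mta)f = P\bigl((J_a^{1/2}\circ M_a)\cdot f\bigr)$, which is not $f$ --- and even if it held it would yield $H_a^* H_a = I$ (unitarity), not $H_a^2 = I$. Since $H_a$ is not Hermitian for $a\neq 0$ (check e.g.\ $(H_\rho)_{1,2}=\rho^3-\rho$ versus $(H_\rho)_{2,1}=2\rho^3-2\rho$ from \eqref{Hkj}), unitary and involutory do not coincide here. The correct and simpler argument, used in the paper, is that $\jhalf\mta$ is \emph{itself} an involution on $L^2_\diamond(\partial\rum{D})$: one has $(\jhalf\mta)^2 f = J_a^{1/2}\cdot(J_a^{1/2}\circ M_a)\cdot f = f$, because $J_a^{1/2}(x)\,J_a^{1/2}(M_a(x))$ is the Jacobian of $M_a\circ M_a=\ident$. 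Combined with \eqref{H:mat} this gives $H_a^2=I$ directly.

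A minor convention slip: in the paper's inner product $\innerm{f,g}=\int f\bar g$, one has $\innerm{f_m,\mathcal{M}_\rho f_n}=\frac{1}{2\pi}\int_0^{2\pi} e^{im\theta}\overline{M_\rho(e^{i\theta})^n}\,d\theta$, not the $e^{-im\theta}$ version you wrote. For real $\rho$ the entries of $H_\rho$ are real, so this is harmless for \eqref{H:SW_NE}--\eqref{H:submat}, but keep it straight when tracking the phase in \eqref{H:complex}.
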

\begin{proof}
 	Since $J_a^{1/2}$ is the Jacobian determinant for the change of variables $M_a$ on $\partial\rum{D}$, and that $\mta = \mta^{-1}$, we get
	\begin{equation*}
		(H_a)_{n,m} = \innerm{J_a^{1/2}\mta f_m,f_n},
	\end{equation*}
	which shows \eqref{H:mat}. \eqref{H:inv} follows directly from \eqref{H:mat} as $J_a^{1/2}\mta$ is an involution on $L^2_\diamond(\partial\rum{D})$ (cf.~\cite[Proposition 2.2]{Garde2016}). Furthermore, as $P$ is self-adjoint in the $L^2(\partial\rum{D})$-inner product (as it is an orthogonal projection), then
	\begin{equation*}
		\innerm{\jhalf\mta f,g} = \innerm{f,\mta g} = \innerm{Pf,\mta g} = \innerm{f,P\mta g},\enskip \forall f,g\in L^2_\diamond(\partial\rum{D}),
	\end{equation*}
	i.e.\ the adjoint of $P\mta$ (in terms of maps from $L_\diamond^2(\partial\rum{D})$ to itself) is $\jhalf\mta$, which shows \eqref{H:adjointmat}.
	
	Proof of \eqref{H:complex} and \eqref{H:flip}: from Lemma \ref{lemma:psia} and \eqref{eq:psiarho} where $\psi_\rho$ is $2\pi$-periodic, then a change of variable from $\theta$ to $\theta+\sang$ gives
	\begin{align*}
		\inner{f_m,\mta f_n} &= \frac{1}{2\pi}\int_0^{2\pi} e^{i(m\theta-n\sang-n\psi_\rho(\theta-\sang))}\,d\theta = \frac{1}{2\pi}\int_0^{2\pi} e^{i(m\theta+m\sang-n\sang-n\psi_\rho(\theta))}\,d\theta \\
		&=e^{i(m-n)\sang}\frac{1}{2\pi}\int_0^{2\pi} e^{i(m\theta-n\psi_\rho(\theta))}\,d\theta = e^{i(m-n)\sang}\inner{f_m,\mathcal{M}_\rho f_n}.
	\end{align*}
	This shows \eqref{H:complex}. Furthermore, 
	\begin{equation*}
		\inner{f_m,\mta f_n} = \frac{1}{2\pi}\int_0^{2\pi}e^{i(m\theta-n\psi_a(\theta))}\,d\theta = \overline{\frac{1}{2\pi}\int_0^{2\pi}e^{-i(m\theta-n\psi_a(\theta))}\,d\theta} = \overline{\inner{f_{-m},\mta f_{-n}}}.
	\end{equation*}
	
	Proof of \eqref{H:SW_NE}: first notice the identity
	\begin{align*}
		\overline{\mathcal{M}_\rho f_n} &= \frac{1}{\sqrt{2\pi}}\overline{\left(\frac{e^{i\theta}-\rho}{\rho e^{i\theta}-1}\right)^n} = \frac{1}{\sqrt{2\pi}}\left(\frac{\rho e^{-i\theta}-1}{e^{-i\theta}-\rho }\right)^{-n} = \frac{1}{\sqrt{2\pi}}\left(\frac{e^{i\theta}-\rho}{\rho e^{i\theta}-1}\right)^{-n} \\
		&= \frac{1}{\sqrt{2\pi}}(e^{i\theta}-\rho)^{-n}(\rho e^{i\theta}-1)^n.
	\end{align*}
	Assume $n>0$, then using the binomial theorem for both $(e^{i\theta}-\rho)^{-n}$ and $(\rho e^{i\theta}-1)^n$ (which for $(e^{i\theta}-\rho)^{-n}$ converges as $\rho<1$) and using that the negative binomial coefficient can be written as
	\[
		\binom{-n}{k'} = (-1)^{k'}\binom{n+k'-1}{k'},
	\]
	gives
	\begin{align*}
		\overline{\mathcal{M}_\rho f_n} &= \frac{1}{\sqrt{2\pi}}\spara{ \sum_{k'=0}^\infty (-1)^{k'}\binom{n+k'-1}{k'}(-\rho)^{k'} e^{-i(k'+n)\theta} }\cdot \spara{ \sum_{k=0}^n \binom{n}{k} (-1)^{n-k}\rho^{k} e^{ik\theta} }  \\
		&= \frac{1}{\sqrt{2\pi}}\sum_{k'=0}^\infty \sum_{k=0}^n (-1)^{n-k}\binom{n+k'-1}{k'}\binom{n}{k}\rho^{k+k'}e^{i(k-k'-n)\theta},\enskip n>0. 
	\end{align*}
	Thus
	\begin{equation}
		\inner{f_m,\mathcal{M}_\rho f_n} = \frac{1}{2\pi}\sum_{k'=0}^\infty \sum_{k=0}^n (-1)^{n-k}\binom{n+k'-1}{k'}\binom{n}{k}\rho^{k+k'}\int_0^{2\pi}e^{i(m+k-k'-n)\theta}\,d\theta. \label{binomsum}
	\end{equation}
	If $m<0$ and $n>0$ then $m+k-k'-n < 0$ for all $k=0,1,\dots,n$ and $k'\geq 0$ so $\int_0^{2\pi}e^{i(m+k-k'-n)\theta}\,d\theta = 0$, i.e.
	\begin{equation}
		\inner{f_m,\mathcal{M}_\rho f_n} = 0,\enskip m<0, n>0. \label{zeroint}
	\end{equation}
	Now \eqref{H:SW_NE} follows from \eqref{zeroint}, \eqref{H:complex}, and \eqref{H:flip}.
	
	Proof of \eqref{H:submat}: Assume $n>0$ and $m>0$. We have 
	\begin{equation*}
		\frac{1}{2\pi}\int_0^{2\pi}e^{i(m+k-k'-n)\theta}\,d\theta = \delta_{m+k-k'-n,0},
	\end{equation*}
	so to find the non-zero coefficients in \eqref{binomsum} we need to determine when $m+k-k'-n = 0$, i.e.\ set $k' = m+k-n$ and find $m+k-n\geq 0$ (as we have $k'\geq 0$). Since $k\geq 0$ we need $k' = m+k-n$ and $k\geq \max\{n-m,0\}$. Thus \eqref{binomsum} becomes
	\begin{equation*}
		\inner{f_m,\mathcal{M}_\rho f_n} = \sum_{k=\max\{n-m,0\}}^n (-1)^{n-k}\binom{k+m-1}{k+m-n}\binom{n}{k}\rho^{2k+m-n},\enskip n>0,m>0.
	\end{equation*}
\end{proof}

\begin{figure}[htb]
\centering
\subfigure[$\absm{a} = 0.2$]{\includegraphics[width=.32\textwidth,natwidth=5.04in,natheight=4.93in]{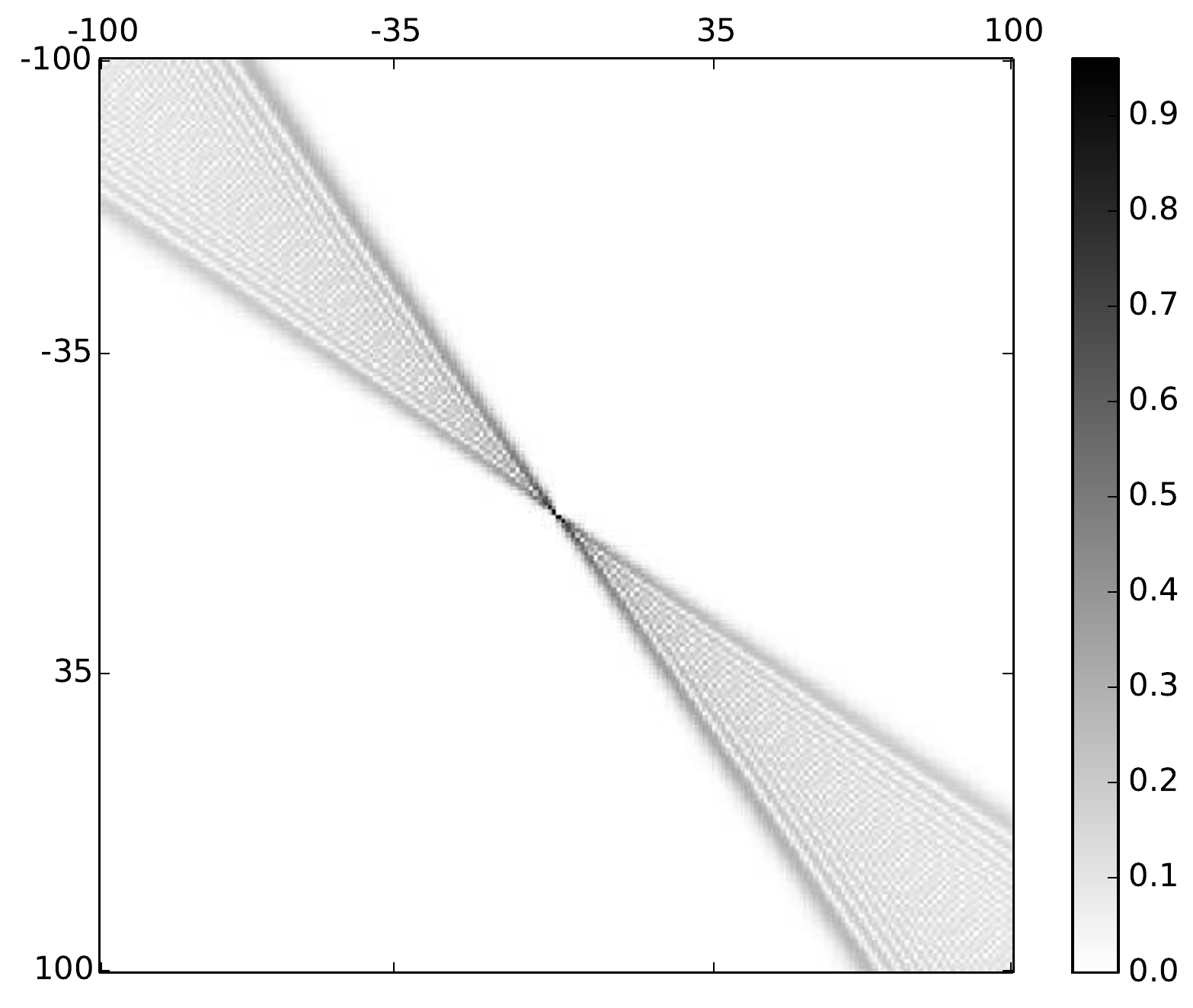}} \label{fig:fig1a}
\hfill
\subfigure[$\absm{a} = 0.5$]{\includegraphics[width=.32\textwidth,natwidth=5.13in,natheight=4.93in]{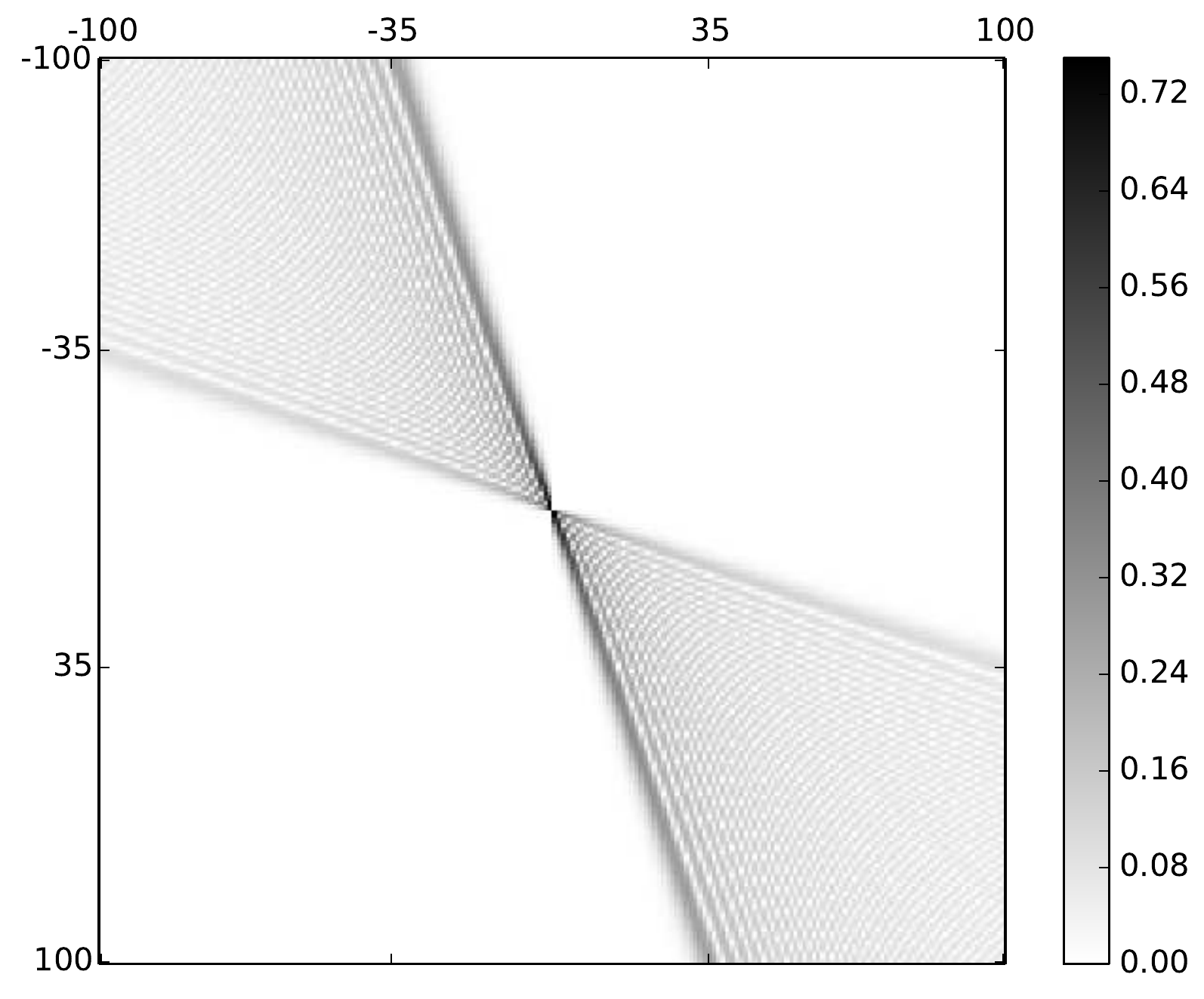}} \label{fig:fig1b}
\hfill
\subfigure[$\absm{a} = 0.8$]{\includegraphics[width=.32\textwidth,natwidth=5.04in,natheight=4.93in]{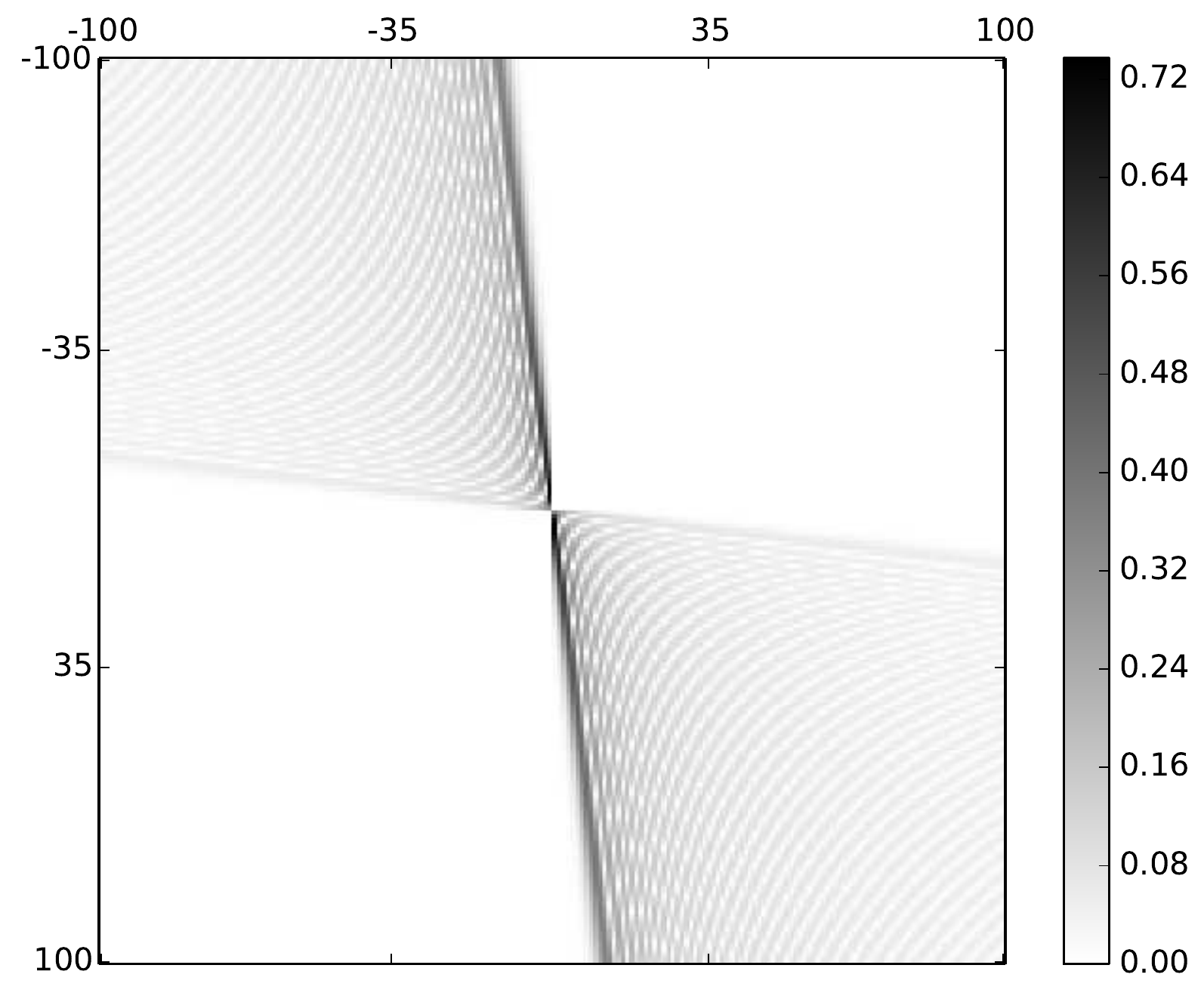}} \label{fig:fig1c}
\caption{Plot of absolute value $\absm{(H_a)_{n,m}}$ for $\absm{n},\absm{m} = 1,2,\dots,100$.} \label{fig:fig1}
\end{figure}

\Fref{fig:fig1} shows the structure of the $H_a$-matrices for different choices of $a$. It is evident that as $\absm{a}\to 1$, and thereby $M_a$ maps $B_{0,r}$ to a ball $B_{C,R}$ close to the boundary, the representation $H_a$ becomes less sparse in the Fourier basis. 

If we denote the matrix representation of $\nd(\gamma_{C.R})$ by
\begin{equation*}
	A_{n,m} \equiv \innerm{\nd(\gamma_{C,R})f_m,f_n}, 
\end{equation*}
and for $\nd(\gamma_{0,r})$ the matrix representation is a diagonal matrix $D$ with $D_{n,n} \equiv \lambda_n$ from \eqref{eq:lam}, then from Theorem \ref{H:thm} and \eqref{ndmob} we have
\begin{equation}
	A = (H_a)^* D H_a. \label{afact}
\end{equation}
Due to the centrohermitian and block diagonal properties of both $H_a$ and $D$ they have the block structure
\begin{equation}
	H_a = \begin{pmatrix}
		\mathcal{J}\roverline{H_a^+}\mathcal{J} & 0 \\ 0 & H_a^+
	\end{pmatrix}, \quad D = \begin{pmatrix}
		\mathcal{J}D^+\mathcal{J} & 0 \\ 0 & D^+
	\end{pmatrix}, \label{centrohermstruct}
\end{equation}
where $\mathcal{J}$ is the previously defined exchange matrix, $H_a^+$ is the lower right part of $H_a$ i.e.\ for $n>0,m>0$, and similarly $D^+$ is the lower right part of $D$. Therefore we get the following structure for $A$:
\begin{equation}
	A = \begin{pmatrix}
		\mathcal{J}\roverline{(H_a^+)^* D^+ H_a^+}\mathcal{J} & 0 \\
		0 & (H_a^+)^*{D^+} H_a^+
	\end{pmatrix}. \label{Amatstruct}
\end{equation}
That the matrix representation of $\nd(\gamma_{C,R})$, for any ball inclusion $B_{C,R}$, is a block diagonal matrix of the form \eqref{Amatstruct} is a non-trivial result of the factorization \eqref{afact}. For a general $\gamma\in L_+^\infty(\rum{D})$ the matrix representation of $\nd(\gamma)$ is not block diagonal. Furthermore, it also means that we only need to evaluate and save the $H_a^+$-part for assembling $A$.

An explicit formula can also be obtained for the Fr\'echet derivative $\mathcal{R}'(1)$ on ball inclusions. 
\begin{proposition}
	Denote by $A'$ the matrix representation of $\nd'(1)[\chi_{B_{C,R}}]$, i.e.
	\begin{equation}
		A'_{n,m} \equiv \innerm{\nd'(1)[\chi_{B_{C,R}}]f_m,f_n}, \enskip n,m\in\rum{Z}\setminus\{0\}, \label{a'mat}
	\end{equation}
	then 
	\begin{enumerate}[(i)]
		\item $A'$ is Hermitian, i.e.\ $A' = (A')^*$. \label{A':herm}
		\item $A'$ is centrohermitian, i.e.\ $A'_{n,m} = \overline{A'_{-n,-m}}$. \label{A':centroherm}
		\item $A'$ is block diagonal with $A'_{n,m} = 0$ for $n<0, m>0$ and for $n>0, m<0$. \label{A':blockdiag}
		\item There is the following formula for $n>0, m>0$, recalling that $C = ce^{i\sang}$: \label{A':formula}
		\begin{equation}
			A'_{n,m} = -e^{i(m-n)\sang}\sum_{k=0}^{\min\{n,m\}-1} \frac{1}{k+1}\binom{m-1}{k}\binom{n-1}{k}c^{m+n-2k-2}R^{2k+2}. \label{eq:A'formula}
		\end{equation}
	\end{enumerate}
	\begin{proof}
		\eqref{A':herm} and \eqref{A':centroherm} follows directly from \eqref{eq:frechet} and that $w_{\overline{g}} = \overline{w_g}$ (similar argument as in the beginning of \sref{sec:matstruct}), and the proof of \eqref{A':blockdiag} can be done in an analogous way to the proof of Theorem \ref{H:thm}. These three properties can also be derived from the fact that the matrix representations of both $\nd(\gamma_{C,R})$ and $\nd(1)$ have the same three properties.
		
		Now proving \eqref{A':formula}. Let $n>0,m>0$ and for $x\in\rum{D}$ write $x \equiv x_1+ix_2$ for real-valued $x_1$ and $x_2$. Then the solution to \eqref{eq:pde} with $\gamma\equiv 1$ and Neumann boundary condition $f_n$ from \eqref{eq:fbasis} is
		\begin{equation*}
			w_n(x) \equiv \frac{1}{n\sqrt{2\pi}}x^n,\enskip n>0.
		\end{equation*}
		From \eqref{eq:frechet} we get
		\begin{equation*}
			A'_{n,m} = -\int_{B_{C,R}} \nabla w_m\cdot \overline{\nabla w_n}\,dx = -\frac{1}{\pi}\int_{B_{C,R}} x^{m-1}\overline{x}^{n-1}\,dx.
		\end{equation*}
		Applying a change of variables from $x$ to $x+C$ and writing $x=\eta e^{i\theta}$ in polar coordinates yields
		\begin{align*}
			A'_{n,m} &= -\frac{1}{\pi}\int_{B_{0,R}} (x+C)^{m-1}\overline{(x+C)}^{n-1}\,dx \\
			&= -\frac{1}{\pi}\int_0^R\int_{0}^{2\pi} (\eta e^{i\theta}+C)^{m-1}(\eta e^{-i\theta}+\overline{C})^{n-1}\eta\,d\theta\, d\eta.
		\end{align*}
		Now using the binomial theorem for $(\eta e^{i\theta}+C)^{m-1}$ and $(\eta e^{-i\theta}+\overline{C})^{n-1}$ 
		\begin{equation*}
			A'_{n,m} = -\frac{1}{\pi}\sum_{k=0}^{m-1}\sum_{k'=0}^{n-1} \binom{m-1}{k}\binom{n-1}{k'}C^{m-k-1}\overline{C}^{n-k'-1}\int_0^R \eta^{k+k'+1}\,d\eta\int_{0}^{2\pi} e^{i(k-k')\theta}\,d\theta.
		\end{equation*}
		Here the term $\int_{0}^{2\pi} e^{i(k-k')\theta}\,d\theta$ is only non-zero for $k'=k$, which can only hold for $k$ up to $\min\{n,m\}-1$. Also recalling that $C = ce^{i\sang}$ gives the expression in \eqref{eq:A'formula}
		\begin{align*}
			A'_{n,m} &= -2\sum_{k=0}^{\min\{n,m\}-1} \binom{m-1}{k}\binom{n-1}{k}C^{m-k-1}\overline{C}^{n-k-1}\int_0^R \eta^{2k+1}\,d\eta \\
			&= -e^{i(m-n)\sang}\sum_{k=0}^{\min\{n,m\}-1} \frac{1}{k+1}\binom{m-1}{k}\binom{n-1}{k}c^{m+n-2k-2}R^{2k+2}.
		\end{align*}
	\end{proof}
	
\end{proposition}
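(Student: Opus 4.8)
The plan is to treat the four claims in increasing order of work, using two structural facts: the explicit harmonic functions solving \eqref{eq:pde} with $\gamma\equiv 1$ and Neumann datum $f_n$, namely $w_n(x)=\frac{1}{n\sqrt{2\pi}}x^n$ for $n>0$ (and $w_{-n}=\overline{w_n}$, which holds because $f_{-n}=\overline{f_n}$ and the solution operator for the real conductivity $\gamma\equiv 1$ commutes with complex conjugation, as noted at the beginning of \sref{sec:matstruct}); and the representation $A'_{n,m}=-\int_{B_{C,R}}\nabla w_m\cdot\overline{\nabla w_n}\,dx$ obtained by inserting $\eta=\chi_{B_{C,R}}$ into \eqref{eq:frechet}.

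For \eqref{A':herm} and \eqref{A':centroherm}: conjugating the integrand and using that $B_{C,R}$ is a real set gives $\overline{A'_{n,m}}=-\int_{B_{C,R}}\overline{\nabla w_m}\cdot\nabla w_n\,dx=A'_{m,n}$, which is \eqref{A':herm}; substituting $w_{-n}=\overline{w_n}$ and the real-linearity of $\nabla$ into the formula for $A'_{-n,-m}$ shows it equals $\overline{A'_{n,m}}$, which is \eqref{A':centroherm}. As the statement hints, one can alternatively deduce both from the fact that the matrix representations of $\nd(\gamma_{C,R})$ and $\nd(1)$ are each Hermitian and centrohermitian (cf.\ \eqref{centroherm1} and Theorem \ref{H:thm}), since $A'$ arises as a difference quotient of such matrices.

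For \eqref{A':blockdiag} I would compute the off-block entries directly. If $m>0$ and $n<0$, then $\overline{w_n}=w_{|n|}$ is holomorphic, so both $w_m$ and $\overline{w_n}$ are holomorphic on $\rum{D}$. Writing the real gradient of a holomorphic $f$ as $\nabla f=(f',if')$ (Cauchy--Riemann), one gets $\nabla w_m\cdot\overline{\nabla w_n}=\nabla w_m\cdot\nabla\overline{w_n}=w_m'\,(\overline{w_n})'+(iw_m')(i(\overline{w_n})')=0$ pointwise, so $A'_{n,m}=0$; the case $n>0,m<0$ then follows from \eqref{A':herm}. A more pedestrian alternative is to expand $x^{m-1}\overline{x}^{n-1}$ by the binomial theorem and integrate angularly, exactly as in the proof of Theorem \ref{H:thm}.

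For \eqref{A':formula}, restrict to $n,m>0$. From $\nabla x^k=(kx^{k-1},ikx^{k-1})$ the dot product collapses to $\nabla w_m\cdot\overline{\nabla w_n}=\tfrac{1}{\pi}x^{m-1}\overline{x}^{n-1}$, so $A'_{n,m}=-\tfrac{1}{\pi}\int_{B_{C,R}}x^{m-1}\overline{x}^{n-1}\,dx$. Translating by $C$ to centre the ball at the origin, passing to polar coordinates $x=\eta e^{i\theta}$ on $B_{0,R}$, expanding $(\eta e^{i\theta}+C)^{m-1}$ and $(\eta e^{-i\theta}+\overline{C})^{n-1}$ via the binomial theorem, and performing the $\theta$-integral eliminates every cross term except those with equal indices $k=k'\leq\min\{n,m\}-1$; the radial integral yields $R^{2k+2}/(2k+2)$, and collecting $C^{m-1-k}\overline{C}^{n-1-k}=c^{m+n-2k-2}e^{i(m-n)\sang}$ produces exactly \eqref{eq:A'formula}. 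The only real obstacle is the bookkeeping in this last step — keeping the binomial coefficients and the summation range correct after the angular integration — together with being careful about the convention that $\nabla$ is the real $\rum{R}^2$-gradient acting on $\rum{C}$-valued functions in the argument for \eqref{A':blockdiag}; the rest is short symmetry manipulation.
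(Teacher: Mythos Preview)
Your proposal is correct and follows essentially the same route as the paper: the same explicit harmonic extensions $w_n$, the same reduction of $A'_{n,m}$ to $-\tfrac{1}{\pi}\int_{B_{C,R}}x^{m-1}\overline{x}^{n-1}\,dx$, and the same translate--polar--binomial--orthogonality computation for \eqref{A':formula}. The one place you go slightly further than the paper is \eqref{A':blockdiag}: the paper merely says the vanishing can be shown ``in an analogous way to the proof of Theorem~\ref{H:thm}'' (i.e.\ binomial expansion plus angular integration), whereas your Cauchy--Riemann observation that $\nabla f\cdot\nabla g=f'g'(1+i^2)=0$ for two holomorphic functions gives the block-diagonality in one line; this is a tidy shortcut but not a different strategy.
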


Analogous to \eqref{centrohermstruct} the matrix structure of $A'$ is 
\begin{equation*}
	A' = \begin{pmatrix}
		\mathcal{J}\roverline{A'^+}\mathcal{J} & 0 \\
		0 & A'^+
	\end{pmatrix},
\end{equation*}
thus we only need to evaluate the lower right part $A'^+$, and as $A'$ is Hermitian it is sufficient to evaluate the upper triangular part of $A'^+$.

\section{Implementation details and numerical results} \label{sec:numerical}

In this section we will shortly discuss the implementation details for the algorithms \eqref{eq:regnonlinrecon} and \eqref{eq:reglinrecon}, and apply the linear and non-linear approach to the three examples in \fref{fig:fig2}. These three examples are difficult scenarios for EIT reconstruction. Example~A will demonstrate if the algorithms can reconstruct very non-convex shapes, in particular where the non-convex part is oriented away from the closest boundary. Example~B will test if the algorithms can separate relatively small convex inclusions. Since the monotonicity method cannot detect holes in inclusions \cite{Harrach13}, the point of Example~C will be to test whether one wide inclusion can partially shield another inclusion, potentially making it difficult to separate the two in the presence of noise. The $\beta$-values for the reconstructions are chosen as in \eqref{eq:betaval} which for the examples in \fref{fig:fig2} are $\beta^{\textup{nonlin}} = 4$ and $\beta^{\textup{lin}} = 0.8$.
\begin{figure}[!htb]
\centering
\subfigure[Example A]{\includegraphics[width=.32\textwidth,natwidth=6.29in,natheight=5.29in]{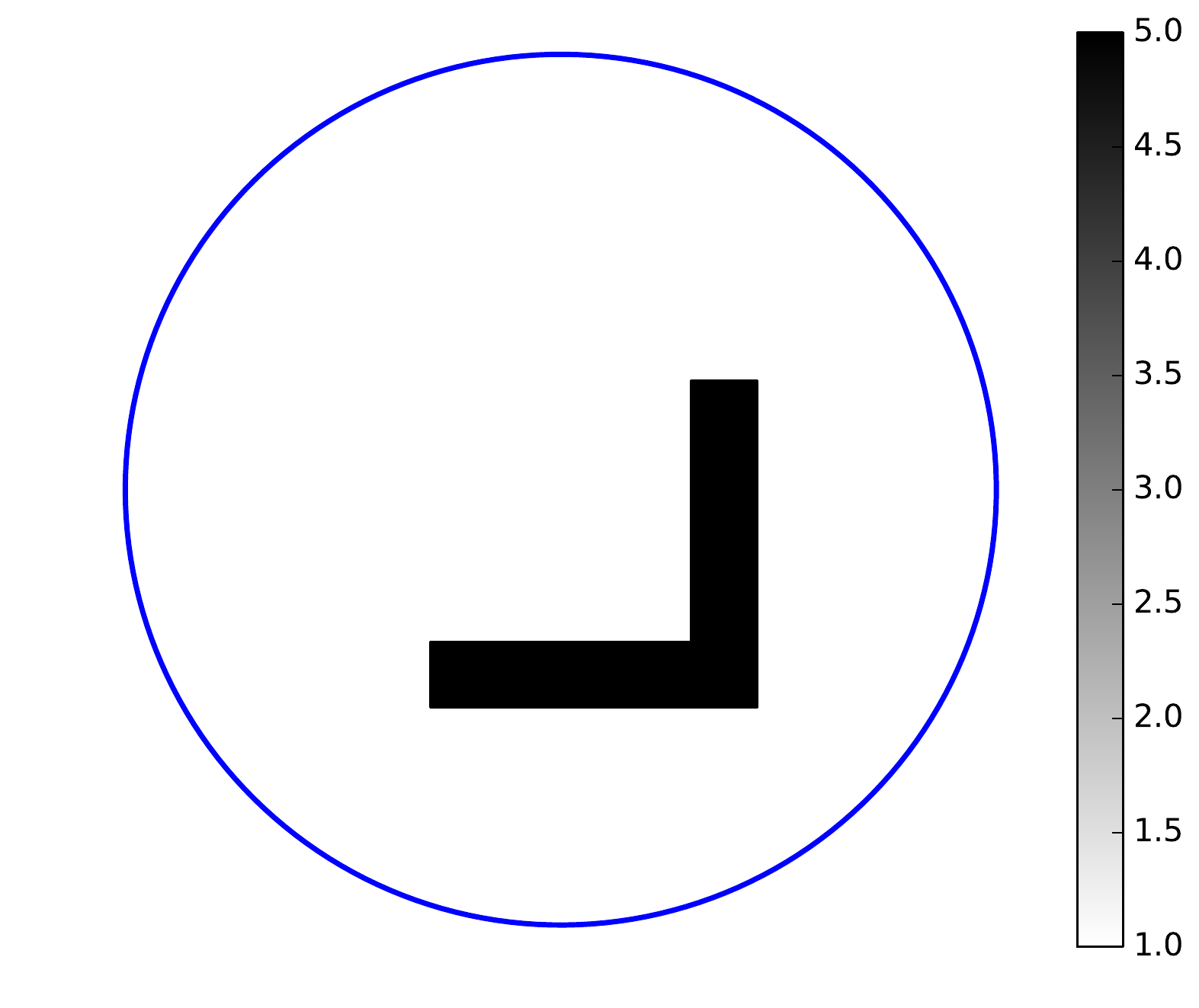}} \label{fig:fig2a}
\hfill
\subfigure[Example B]{\includegraphics[width=.32\textwidth,natwidth=6.29in,natheight=5.29in]{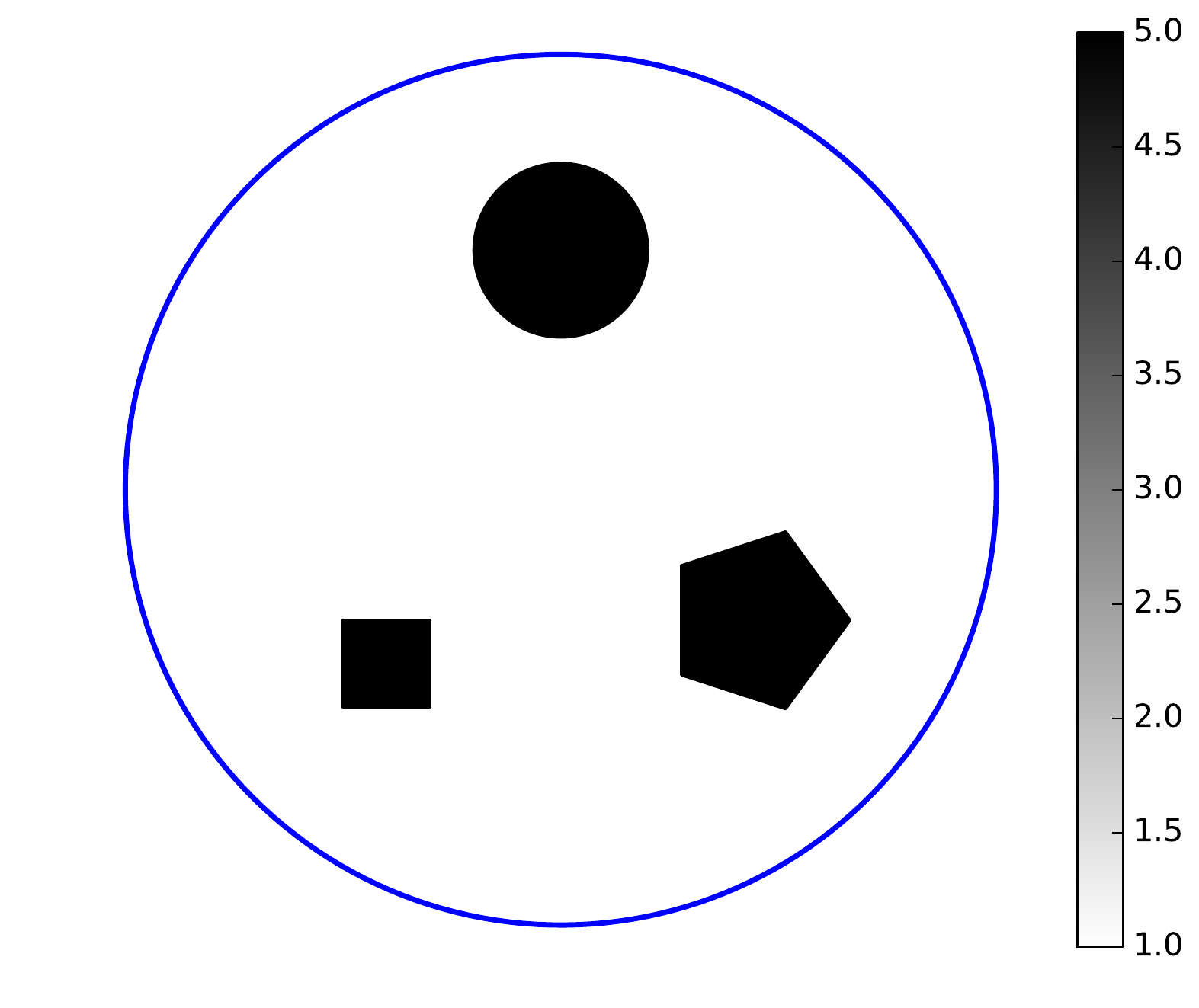}} \label{fig:fig2b}
\hfill
\subfigure[Example C]{\includegraphics[width=.32\textwidth,natwidth=6.29in,natheight=5.29in]{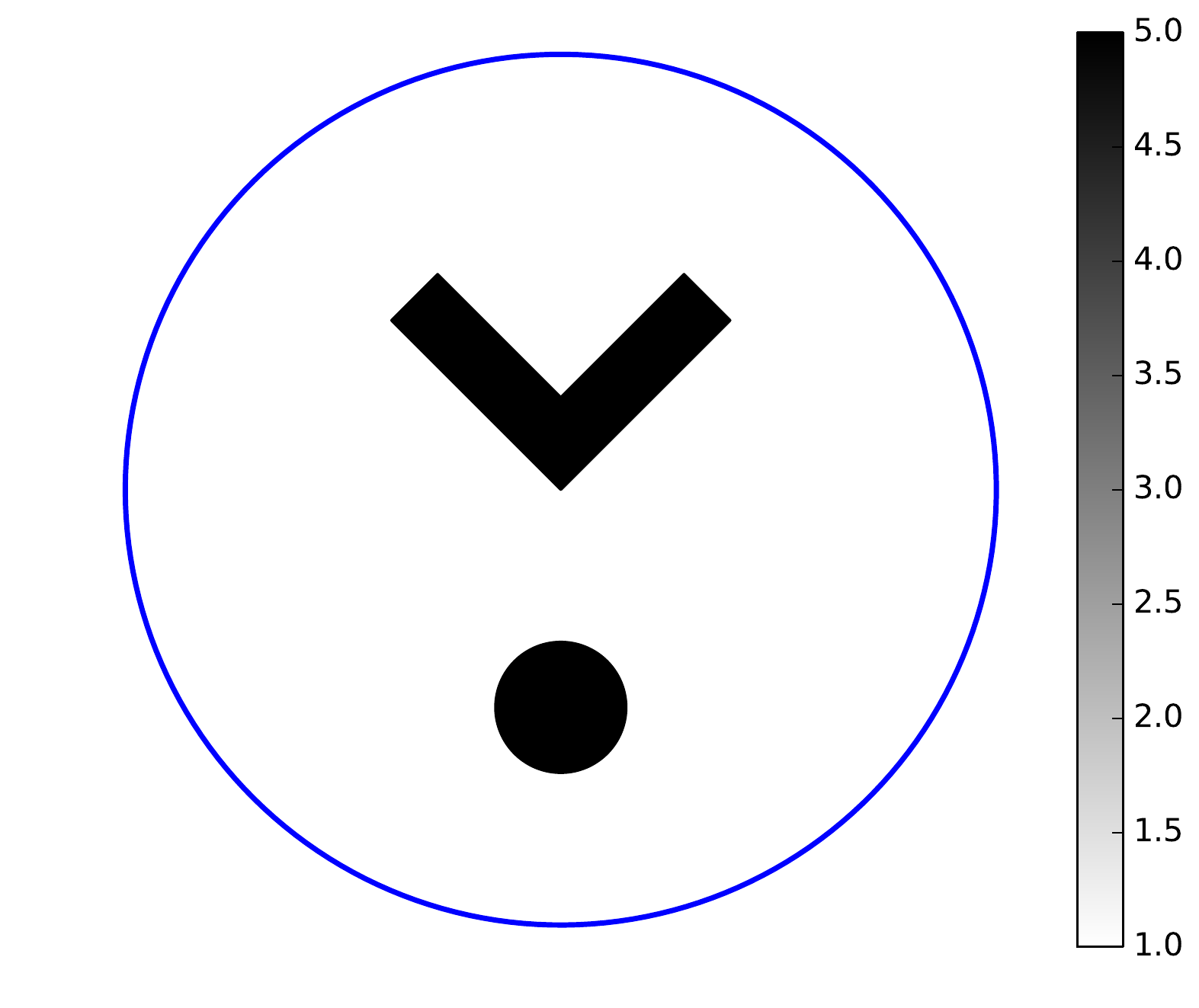}} \label{fig:fig2c}
\caption{Numerical phantoms.} \label{fig:fig2}
\end{figure}

For the numerical implementation we will use a regular hexagonal tiling, where each hexagon has a centre $C$ and radius $R$ such that its corners are placed on the boundary of the ball $B_{C,R}$. Here the resolution is controlled by the radius $R$ which is kept fixed; it is chosen as $R = 0.025$ for the given examples. For the monotonicity-based reconstructions \eqref{eq:regnonlinrecon} and \eqref{eq:reglinrecon} a hexagon is included in the reconstruction if the corresponding ball $B_{C,R}$ yields a positive semi-definite operator in the monotonicity test. The positive semi-definiteness is determined from the sign of the smallest eigenvalue, which are real-valued as all the involved operators are self-adjoint. 

There can be hexagons inside the inclusions that are not included in the reconstruction when the slightly larger balls intersect the inclusion boundaries. There will always be issues near the boundaries when using a finite discretization. Since the main focus of this paper is the comparison of the two methods then using a regular tiling, rather than overlapping balls, makes the plots visually easier to read and compare.   

In practice we can only use finite dimensional approximations to the matrix representations in \eqref{afact} and \eqref{a'mat}, so we have $\absm{n},\absm{m} = 1,2,\dots, N$ which gives $2N\times 2N$ matrices. For any linear compact operator $\mathcal{F}: L^2_\diamond(\partial\rum{D})\to L^2_\diamond(\partial\rum{D})$ the corresponding $N$-term matrix approximation is
\begin{equation*}
	F^N_{n,m} \equiv \inner{\mathcal{F}f_m,f_n},\enskip \absm{n},\absm{m}=1,2,\dots, N. 
\end{equation*}
As $\{f_n\}_{n\in\rum{Z}\setminus\{0\}}$ is an orthonormal basis for $L^2_\diamond(\partial\rum{D})$ then $F^N$ is a matrix representation of $P_N \mathcal{F}P_N$, where $P_N$ is the orthogonal projection onto $\mspan\{ f_n \}_{\absm{n}=1,2,\dots,N}$. It is straightforward to show that the eigenvalues of $F^N$ and $P_N \mathcal{F}P_N$ coincide, and from spectral theory of compact operators (e.g.\ \cite{Osborn1975,Kato1995}) it is well known that the spectrum of $P_N \mathcal{F}P_N$ converge to that of $\mathcal{F}$ as $N\to\infty$. 

In the numerical examples below we use $N = 16$ which implies 32 orthonormal current patterns; more than 32 current patterns for a 2D reconstruction is often considered excessive. Since the factorization \eqref{afact} holds in terms of infinite matrices, we use a much larger $\tilde{N}>N$ to generate the $H_a$-matrices for which we according to \eqref{Amatstruct} only construct the $\tilde{N}\times\tilde{N}$ matrices $H_a^+$. Afterwards we use \eqref{Amatstruct} to construct the larger $2\tilde{N}\times 2\tilde{N}$ matrix $A^{\tilde{N}}$ and extract the central $2N\times 2N$ matrix $A^N$ to use for the monotonicity tests in the non-linear case. For the following examples $\tilde{N} = 200$ was used, and that sufficient accuracy is attained is checked through the involution property of $H_a$; namely how large a central $2N\times 2N$ part of $H_a^{\tilde{N}} H_a^{\tilde{N}}$ that equals an identity matrix. 

From Theorem \ref{H:thm}.\eqref{H:submat} it is observed that $(H_\rho)_{n,m}$ is a polynomial in $\rho$ with at most $m$ non-zero terms. The coefficients of the polynomial are independent of $a$, and can therefore be precomputed and reused for the evaluation of $H_a$ for each $a$. Since the coefficients in the polynomial are binomial coefficients, and that we need to evaluate them up to a high index $\tilde{N} = 200$, the summation in \eqref{Hkj} quickly becomes numerically unstable. For this purpose the Python library gmpy2 \cite{gmpy2} is used, which has a fast implementation for exact evaluation of binomial coefficients, and has data structures that support much higher precision (in terms of no.\ of digits) and is able to accurately evaluate the expressions in \eqref{Hkj}. Alternatively, a more stable approach which would not require gmpy2 or the equivalent, is to apply Gauss-Legendre quadrature to the inner products \eqref{Hamat} with weights $\mathbf{w}$ and sample points $\boldsymbol{\theta}$, which from Lemma \ref{lemma:psia} gives
\begin{equation}
	(H_a)_{n,m} \simeq \frac{(-1)^n e^{-in\sang}}{2\pi} \mathbf{w}^{\textup{T}}\exp \left[ i\left( m\boldsymbol{\theta}-2n\arctan\left( \frac{1+\rho}{1-\rho}\tan\left[ \frac{1}{2}(\boldsymbol{\theta}-\sang) \right] \right) \right) \right]. \label{eq:quadint}
\end{equation}
It should be noted that using \eqref{eq:quadint} is about 30 times slower than \eqref{Hkj} when the binomial coefficients are reused, in order to attain the same precision for $\tilde{N} = 200$.

The involved PDEs to simulate the data $\nd(\gamma)$, for the three examples in \fref{fig:fig2}, are solved with a finite element method with piecewise affine elements. The applied mesh is excessively fine ($1.3\cdot 10^5$ nodes) and is aligned with the inclusions, such that we can expect that only the applied noise and the finite dimensional truncation $2N$ has significant influence in the comparison. 

The added noise is a matrix $E^\delta$ cf. \eqref{eq:ndnoisy}, which is constructed in the following way: for each index $(n,m)$ let $E^1_{n,m}$ be a realization from a normal $\mathcal{N}(0,1)$-distribution, and take the Hermitian and centrohermitian parts:
\begin{equation}
	E^2 \equiv \frac{1}{2}\left[E^1 + (E^1)^*\right], \quad E^3 \equiv \frac{1}{2}\left[E^2 + \mathcal{J}\roverline{E^2}\mathcal{J}\right].  \label{eq:symerr}
\end{equation}
If $\mathcal{A}$ is the matrix representation of the noiseless datum $\nd(\gamma)$, then we scale 
\begin{equation*}
	E^4_{n,m} \equiv E^3_{n,m}\mathcal{A}_{n,m}.
\end{equation*}
Finally, the noise is scaled to have a specified norm $\delta$ in the operator norm
\begin{equation*}
	E^\delta \equiv \frac{\delta}{\norm{E^4}}E^4.
\end{equation*}
A different noise realization is used for each noise level $\delta$, however the same noisy data $\nd^\delta(\gamma)$ is used for both the linear and non-linear reconstruction. It should be noted that the measured data $\nd^\delta(\gamma)$ can in practice be symmetrized to achieve \eqref{eq:symerr}, which often reduces the noise level significantly below $\delta$. How much the noise level is reduced completely depends on the particular noise realization. The way the noise is added in this paper makes the reconstructions less dependent on whether we were \emph{lucky} that part of the noise cancels out. The reconstructions are in this sense a worst-case scenario for a specified noise level $\delta$.

It was proved in \cite[Theorem 1]{GardeStaboulis_2016} that the regularization parameter could be chosen as ${\alpha = \delta}$, however it is no guarantee for the best choice of regularization parameter. Furthermore, there is also the truncation of the dimension to $2N$ which implies that slightly more regularization is required. In \cite[Section 5]{GardeStaboulis_2016} it was suggested that a good choice of regularization parameter in both linear and non-linear cases is
\begin{equation}
	\alpha = -\mu \inf\sigma(\nd(1)-\nd^\delta(\gamma)), \label{eq:regchoice}
\end{equation}
where $\sigma$ denotes the spectrum of the operators, and $\mu$ is a parameter that must be tuned; typically very close to 1. The values of $\mu$ are chosen manually, and all lie in the range $[0.9986,1.0002]$.

The reconstruction is fast as it only requires computation of eigenvalues for $2N\times 2N$ Hermitian matrices, and it is suited for parallel computing as the monotonicity tests for different balls can be done completely independently. When $R=0.025$ and without utilizing parallel computing, each reconstruction takes an average of $0.22$ seconds on a laptop with an Intel i7 processor with CPU clock rate of 2.4 Ghz.

In the discretized setting
\begin{equation*}
	S \equiv \{B_{C,R} : (C,R)\text{-hexagon in the discretization}\},
\end{equation*}
we denote the reconstructions as $S_\alpha \equiv S\cap \mathcal{T}_\alpha$ and $S_\alpha'\equiv S\cap \mathcal{T}_\alpha'$ (cf.\ \eqref{eq:regnonlinrecon} and \eqref{eq:reglinrecon}). In this sense, we can define absolute and relative differences with respect to which balls/hexagons that are not in common in the linear and non-linear reconstructions. More precisely, if $|S|$ denotes the number of elements in $S$ (and likewise for the sets below) the absolute and relative differences are defined as
\begin{equation*}
	e_{\text{abs}} \equiv \absm{S_\alpha\setminus S_\alpha'} + \absm{S_\alpha'\setminus S_\alpha}, \qquad e_{\text{rel}} \equiv \frac{e_{\text{abs}}}{|S|}.
\end{equation*}
From \fref{fig:lin} and \ref{fig:nonlin} it is clear that there is hardly any difference in the reconstructions based on the linear and non-linear methods. This is in accordance with table~\ref{tab:diff}, where for the most part there is a difference of 0--3 hexagons; most of which lead to a smallest eigenvalue with a magnitude close to machine precision, and can therefore be attributed to rounding errors.

\begin{table}[htb]
\begin{tabularx}{\textwidth}{|c?Y|Y?Y|Y?Y|Y?}
\cline{2-7}
	\multicolumn{1}{c?}{} & \multicolumn{2}{c?}{Example A} & \multicolumn{2}{c?}{Example B} & \multicolumn{2}{c?}{Example C} \\
	\hline
	$\delta$  & $e_{\text{abs}}$ & $e_{\text{rel}}$ & $e_{\text{abs}}$ & $e_{\text{rel}}$ & $e_{\text{abs}}$ & $e_{\text{rel}}$ \\
	\hline
	\rule{0pt}{2.5ex}$0$       & 10 & $5.432\cdot 10^{-3}$ & 1 & $5.432\cdot 10^{-4}$ & 2 & $1.086\cdot 10^{-3}$ \\
	$10^{-5}$ & 3 & $1.630\cdot 10^{-3}$ & 2 & $1.086\cdot 10^{-3}$ & 3 & $1.630\cdot 10^{-3}$ \\
	$10^{-4}$ & 1 & $5.432\cdot 10^{-4}$ & 0 & 0 & 1 & $5.432\cdot 10^{-4}$ \\
	$10^{-3}$ & 2 & $1.086\cdot 10^{-3}$ & 0 & 0 & 0 & 0 \\
	$10^{-2}$ & 2 & $1.086\cdot 10^{-3}$ & 2 & $1.086\cdot 10^{-3}$ & 1 & $5.432\cdot 10^{-4}$ \\
	\hline
\end{tabularx}
\caption{Absolute and relative differences between the linear and non-linear reconstructions for the examples in \fref{fig:fig2} with different levels of noise $\delta$.} \label{tab:diff}
\end{table}

It is observed that for 32 current patterns it is difficult to reconstruct non-convex shapes, in particular the large L-shaped inclusion in Example~A where the non-convex part is pointed away from the closest boundary. For the non-convex inclusion in Example~C the reconstruction is reasonable in the noiseless case, where for increased noise the separation of the two inclusions is lost. This is a common feature of the monotonicity reconstructions when one larger inclusion partially shields another.

For Example~B there is a reasonable separation of the inclusions even for the highest noise level, and both shapes and locations are found well in the cases $\delta = 0, 10^{-5}, 10^{-4}$. A slight positioning error is present in Example B even in the noiseless case, this can sometimes happen when there are multiple inclusions of various sizes placed asymmetrically. The positioning error as well as shape errors for low noise levels are mainly due to the dimensional truncation; these errors can be reduced by either increasing the pixel size (this affects the size of the balls in the test inclusions) or by increasing the number of current patterns used.

\begin{figure}[htb]
\centering
\includegraphics[width=\textwidth,natwidth=20.1in,natheight=27.21in]{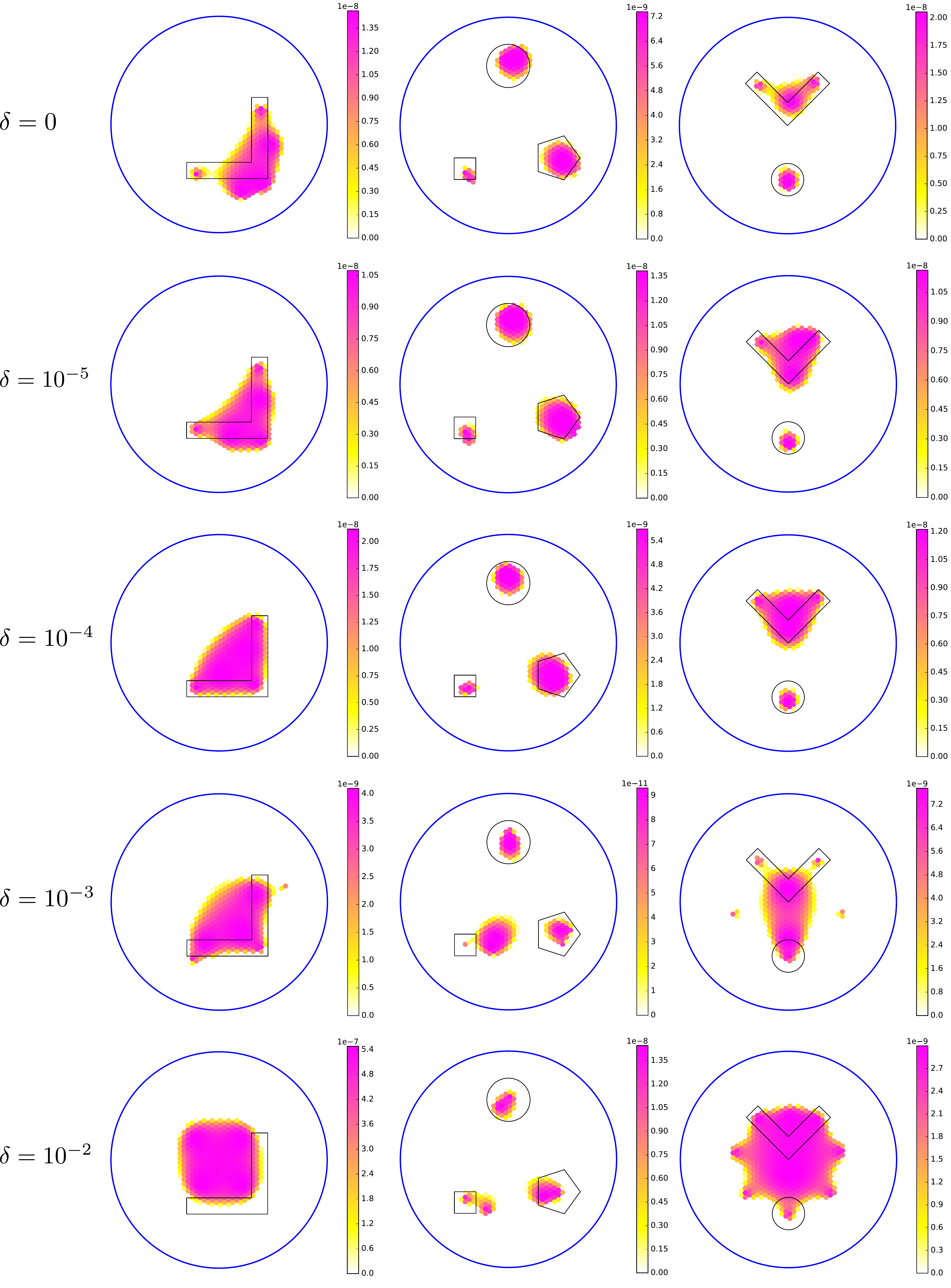}
\caption{Monotonicity reconstruction using the linear algorithm for the examples in \fref{fig:fig2} with various levels of noise $\delta$. The smallest eigenvalues are plotted for the hexagons where the inclusions are detected. The correct targets are outlined with a solid black line.} \label{fig:lin}
\end{figure}

\begin{figure}[htb]
\centering
\includegraphics[width=\textwidth,natwidth=20.1in,natheight=27.21in]{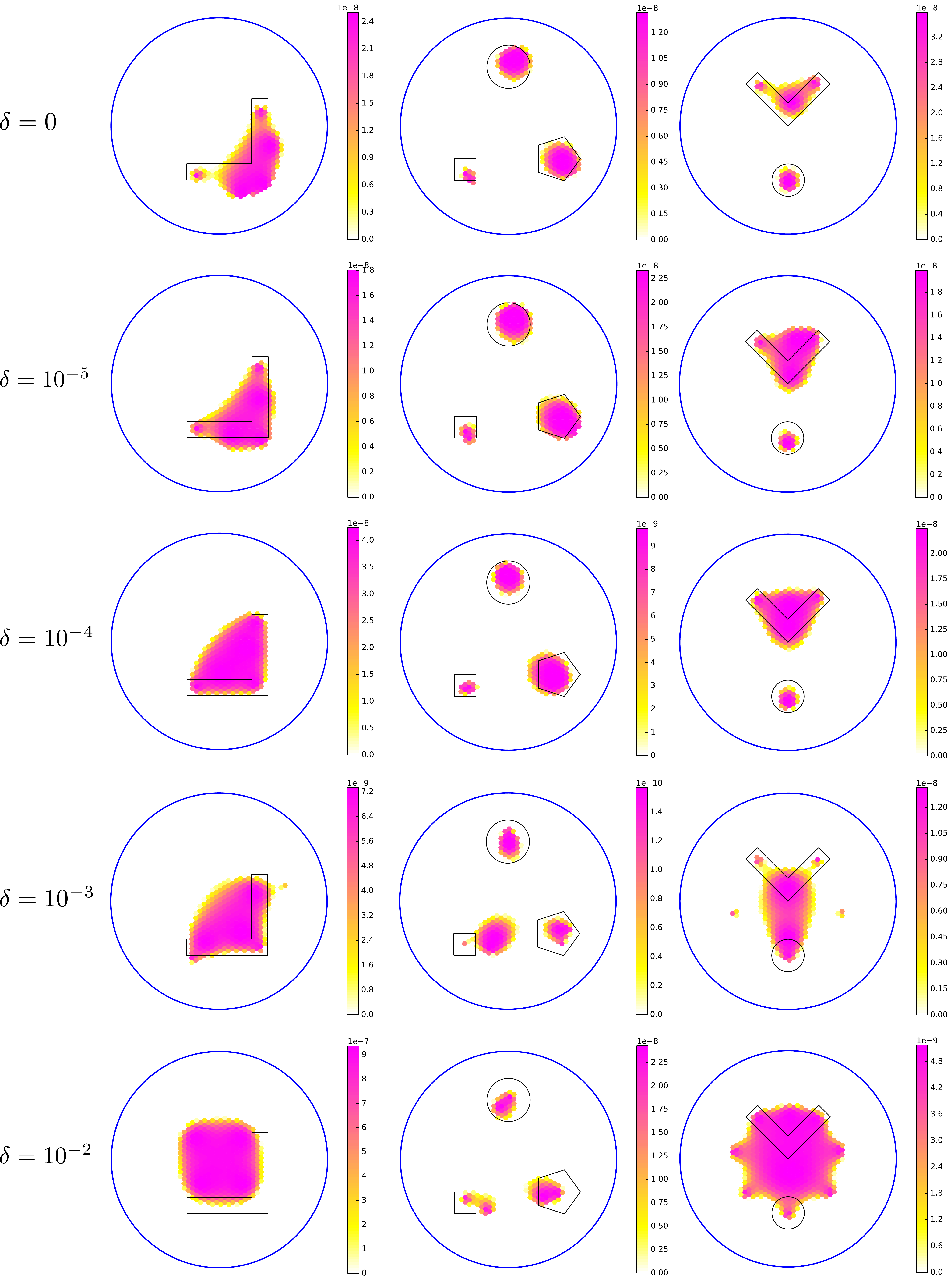}
\caption{Monotonicity reconstruction using the non-linear algorithm for the examples in \fref{fig:fig2} with various levels of noise $\delta$. The smallest eigenvalues are plotted for the hexagons where the inclusions are detected. The correct targets are outlined with a solid black line.} \label{fig:nonlin}
\end{figure}

\section{Conclusions}\label{sec:conclusions}

The linear and non-linear algorithms for monotonicity-based shape reconstruction in electrical impedance tomography are compared, and surprisingly found to essentially yield the same reconstructions both for noiseless and for noisy data. Exact matrix characterizations are derived for the Neumann-to-Dirichlet map and its Fr\'echet derivative for the ball inclusions used in the monotonicity tests. These matrix characterizations ensure that the sources of errors in the reconstructions are limited to the finite dimensional truncation and the added noise.

It is clear that the monotonicity method performs best for detecting small convex shapes, and here it is often possible to separate inclusions quite well in the presence of noise. For non-convex shapes one usually obtain something that resembles a convex approximation to the shape, either due to noise or the limited number of current patterns. 

\section*{Funding}

This research is supported by Advanced Grant No.\ 291405 HD-Tomo from the European Research Council.

\clearpage
\bibliographystyle{gIPE}
\bibliography{minbib}

\end{document}